\newtheorem{thm}{Theorem}
\newtheorem{cor}{Corollary}
\newtheorem{lem}{Lemma}
\newtheorem{rem}{Remark}
\theoremstyle{empty}
\begin{document}

\title{Face sizes and the connectivity of the dual}

\author{
  Gunnar Brinkmann \thanks{Applied Mathematics, Computer Science, and Statistics, Ghent University, Krijgslaan 281 S9, B 9000 Ghent.
    E-mail address: {\tt gunnar.brinkmann@ugent.be}},
Kenta Noguchi\thanks{{\bf Corresponding author} \\Department of Information Sciences,
Tokyo University of Science,
2641 Yamazaki, Noda, Chiba 278-8510, Japan.
E-mail address: {\tt noguchi@rs.tus.ac.jp}
} , 
Heidi Van den Camp\thanks{
Applied Mathematics, Computer Science, and Statistics, Ghent University, Krijgslaan 281 S9, B 9000 Ghent.
    E-mail address: {\tt heidi.vandencamp@ugent.be}
} 
}

\date{}
\maketitle

\noindent
\begin{abstract}

  For each $c\ge 1$ we prove tight lower bounds on face sizes that must be present to allow $1$- or $2$-cuts in simple duals of $c$-connected maps.
  Using these bounds, we determine the smallest genus on which a $c$-connected map can have a simple dual with a $2$-cut and give lower
  and some upper bounds for the smallest genus on which a $c$-connected map can have a simple dual with a $1$-cut.

\end{abstract}

\noindent
\textbf{Keywords:} connectivity, dual, map, face size

\section{Introduction}
In this article we use the word {\em map} for a simple connected graph with a cellular embedding in an orientable surface. In cases where multi-edges are allowed this is explicitly mentioned.
Applying well known operations like e.g.\ dual, truncation, ambo, or others to maps 
can change important properties of the maps. An important
property never changed when operations are applied to polyhedra (plane, 3-connected maps) is 3-connectivity. This property is not only relevant in topological graph theory
when wanting to guarantee polyhedral embeddings, but also in structural graph theory where graphs with cuts of size one or two are often considered trivial -- e.g.\ when discussing snarks or
in some cases when talking about hamiltonian cycles, where such cuts imply a partition of each hamiltonian cycle into two parts with a priori known sets of vertices or even forbid hamiltonian cycles.
Unless girth restrictions are imposed on the graph, 3-connectivity of the dual is also the best one can hope for, as triangles induce 3-cuts in the dual.

Among all symmetry preserving operations (characterized e.g. in \cite{gocox}), the operation dual is not only the most important and best known, but also the
one that can have the strongest impact on the connectivity of the graph. In \cite{3con-op} it is shown that, when applied to maps with a simple dual, all operations are guaranteed to preserve
3-connectivity -- except for the dual itself. In \cite{BBZ} it has been shown that also requiring higher connectivity does not help: for each $c>0$ there are
$c$-connected maps with a simple dual that has a 2-cut or even a 1-cut. The first can already happen for maps with a genus only one higher than the minimum
genus $g_{min}(c)$ of an orientable surface on which a $c$-connected graph can be embedded. We have $g_{min}(c)=0$ for $c\le 5$ and $g_{min}(c)=\lceil
(c-2)(c-3)/12 \rceil$ for $c\ge 6$ \cite{PZ}.  Following \cite{BBZ}, for $k\in \{1,2\}$, we define $\delta_k(c)$ as the smallest genus of an orientable surface on which 
$c$-connected maps can be embedded with a simple dual with a $k$-cut.

This problematic behaviour of the operation dual makes it relevant to find criteria guaranteeing that also the dual is 3-connected. In \cite{preserve_polyh} it
is shown that in case the map is polyhedral, polyhedrality and therefore also 3-connectivity is preserved by all operations -- including the dual. 
Polyhedrality is a very strong requirement and many graphs do not allow polyhedral embeddings on any orientable surface, so less restrictive criteria, that still guarantee 3-connectivity,
can be very useful.

In this paper we will give a criterion guaranteeing 3-connectivity, resp.\ 2-connectivity, of simple duals only based on face sizes. We will denote the size of a face $f$ in a map as $d(f)$. As a
corollary of the main results, also questions from \cite{BBZ} are answered: while it was shown in \cite{BBZ} that for $c\ge 7$ $\delta_2(c) \le g_{min}(c)+1$, it was
explicitly asked whether this bound is sharp or whether for some $c>2$ we have $\delta_2(c) = g_{min}(c)$. We will show that $\delta_2(c) = g_{min}(c)+1$ for
all $c>2$. We will also give better bounds on $\delta_1(c)$: the upper bound for $\delta_1(c)$ proven in \cite{BBZ} is $(c^2+6c-5)/4$ -- so instead of a small additive constant like for $\delta_2(c)$, the bound is a constant factor
away from $g_{min}(c)$. We will -- at least for $c=12s+8$ with $s\ge 2$ -- improve this bound to $\delta_1(c)\le g_{min}(c)+3$ and prove that for all $c$ we have
$\delta_1(c)\ge g_{min}(c)+2>\delta_2(c)$.

\section{Basic results and notation}

Graphs $G=(V,E)$ embedded in an orientable surface $\mathbb{S}_g$ of genus $g$ -- or with other words {\em maps} of genus $g$ -- can be described by a function
$\phi: G \to \mathbb{S}_g$ describing the graph as a subset of a topological surface, or (if the embedding is cellular) by a rotation system \cite{GT,MT}.  In
the first case the embedding is {\em cellular} if $\mathbb{S}_g \backslash \phi(G)$ is a disjoint union of open disks (2-cell regions). These regions are called
faces. A rotation system represents each edge by two oppositely directed oriented edges and for each vertex a rotational order of edges starting at this vertex
is defined.  In this case the faces are closed directed walks (called facial walks) of directed edges $e_1,e_2,\dots ,e_k=e_1$ obtained by following an oriented
edge $e$ in one direction, then taking the next in rotational order of the inverse of $e$ as new edge until one returns to the first edge and traverses it in
the same direction. By abuse of language we sometimes refer to {\em an edge} when in fact {\em one of the oriented versions of the edge} is meant. This will not
cause misunderstandings.  An {\em angle} of a directed cyclic walk $e_1,e_2,\dots ,e_k=e_1$ is a pair $(e_i,e_{i+1})$. For details and the equivalence of the
two approaches we refer the reader to \cite{GT,MT}.  If one interprets the order of edges as clockwise, the face can be said to be {\em on the left} of the
facial walk. We will often by abuse of language use the graph notion $G=(V,E)$ and when talking about a map $G$ implicitly assume a rotation system or the
function describing the graph on the surface to be given.  The dual $G^*=(V^*,E^*)$ of a map $G=(V,E)$ is the map where the faces of $G$ are the vertices,
together with a rotation system (resp.\ a function $\phi^*: G^* \to \mathbb{S}_g$) induced by the rotation system (the function $\phi$) of $G$. There is a
natural bijection between $E$ and $E^*$. We denote the edge of $G^*$ corresponding to the edge $e$ of $G$ as $e^*$ and analogously for sets of edges. As the
vertices of a dual map $G^*=(V^*,E^*)$ are the faces of the original map $G$, for an element $f\in V^*$ we can also refer to the face $f$ of $G$ (and analogously for
vertices of $G$). For details on the dual see again \cite{GT, MT}.

Let $G=(V,E)$ be a connected graph and $X, Y\subseteq V$, $X\cap Y=\emptyset$.
Then $E[X, Y]$ denotes the set of edges of $G$ with one end in $X$ and the other end in $Y$. 
The set $E[X, V\backslash X]$ is an \emph{edge cut} of $G$ induced by $X$. 
If an edge cut consists of $k$ edges, then it is called a \emph{$k$-edge cut}. 
A \emph{cut-set}, or more specific \emph{$k$-cut}, of a connected graph is a set $T\subseteq V$ of $k$ vertices such that $G\backslash T$ is not connected. 
A \emph{separating $k$-cycle} of $G$ is a cycle so that the vertices in the cycle are a $k$-cut.

The following lemma about the relation between an edge-cut in the dual and separating cycles in the original graph is folklore for plane graphs, but needs a few words in the
general case. For a set $Y$ of faces of a graph $G=(V,E)$ we define $V(Y)=\{v\in V\mid \exists f\in Y, e\in f, v\in e\}$ and $E(Y)=\{e\in E\mid \exists f\in Y, e\in f\}$.
For $Y$ a set of edges we define $V(Y)=\{v\in V\mid \exists e\in Y, v\in e\}$.

We say that a set $K$ of edges is {\em dual-separating} in a map $G=(V,E)$, if $K^*$ is an edge-cut in $G^*$.

\begin{lem} \label{lem:cut_dual}
  Let $G=(V,E)$ be a map with a simple dual $G^*=(V^*,E^*)$ and let $K\subset E$ be a dual-separating set of edges. Let $X_f\subset V^*$ be a set of vertices
  inducing the edge cut $K^*\subset E^*$ of $G^*$. Note that the elements of $X_f$ are faces of $G$.

  Then we have the following properties:

  \begin{description}

  \item[(i):] $\forall e\in K$:  $e$ is incident with one face in $X_f$ and one face in $X_f^c$.

  \item[(ii):]  If $v,v'\not\in V(K)$,  $v\in V(X_f)$, $v'\in V(X_f^c)$, then there is no path from $v$ to $v'$ without vertices in $V(K)$.
    So if such $v,v'$ exist, $V(K)$ is a cut-set.

  \item[(iii):]  $K$ can be decomposed into cyclic directed walks $Z_1,\dots ,Z_m$, so that $\{e\in E(X_f) \mid e\cap V(K)\not= \emptyset\}$ is exactly
    $K$ together with all edges $e$ for which an angle $(e',e'')$ in one of $Z_1,\dots ,Z_m$ exists, so that $e$ comes after the inverse of $e'$ and before $e''$ in the cyclic order
    around the endpoint of $e'$ -- or informally speaking: together with all edges on the left of and incident to $Z_1,\dots ,Z_m$. The cyclic walks  $Z_1,\dots ,Z_m$ are facial walks
    in the components of the graph induced by $K$ with the embedding induced by $G$.

    \item[(iv:)] $|V(K)|\le |K|$.

    \end{description}

\end{lem}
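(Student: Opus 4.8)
The plan is to handle the four parts in order, building everything on a single local principle: around any vertex of $G$, the partition of its incident faces into $X_f$ and $X_f^c$ switches exactly at the edges of $K$. For (i) I would argue directly from the hypothesis that $G^*$ is simple. Having no loops, $G^*$ forces every edge $e$ of $G$ to be incident to two distinct faces, so $e^*$ joins two distinct vertices of $G^*$. Since $K^* = E^*[X_f, X_f^c]$ is by definition the set of dual edges crossing the cut induced by $X_f$, an edge $e^* \in K^*$ has one end in $X_f$ and one in $X_f^c$; translated back to $G$ this says the two faces incident to $e$ lie one in $X_f$ and one in $X_f^c$.

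Next I would record the local principle precisely. Listing the faces incident to a vertex $v$ in rotational order as a cyclic sequence $f_1,\dots,f_d$, consecutive faces $f_i,f_{i+1}$ meet along an edge $e_i$ at $v$, and by (i) the type of the face (membership in $X_f$ versus $X_f^c$) changes as we pass $e_i$ if and only if $e_i \in K$. Two consequences follow that I reuse throughout: the number of $K$-edges at $v$ is even, and if $v \notin V(K)$ then all faces at $v$ share one type, which I call the type of $v$. Part (ii) is then immediate: a vertex of $V(X_f)\setminus V(K)$ has type $X_f$ and one of $V(X_f^c)\setminus V(K)$ has type $X_f^c$; moreover adjacent vertices $u,w \notin V(K)$ share the two faces bounding their common (non-$K$) edge and hence share a type, so type is invariant along any path avoiding $V(K)$. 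Thus no such path can join $v$ to $v'$, and deleting $V(K)$ separates them, so $V(K)$ is a cut-set.

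The core is (iii). I would orient each edge of $K$ so that its $X_f$-incident face lies on the left, and then show that the facial-walk rule applied to the subgraph $H=(V(K),K)$ with the rotation inherited from $G$ retraces exactly these oriented edges: arriving at $v$ along $e' \in K$, the next edge of the walk is the next $K$-edge after the inverse of $e'$ in the rotation at $v$, and the angular sector swept between them is one of the $X_f$-type sectors supplied by the local principle. Since each edge of $K$ borders exactly one $X_f$-type region and one $X_f^c$-type region, the walks $Z_1,\dots,Z_m$ that keep an $X_f$-sector on the left traverse each $K$-edge exactly once and are therefore facial walks of $H$ partitioning $K$. For the edge-set identity I would verify both inclusions via the local principle: an edge incident to $V(K)$ and to an $X_f$-face is either in $K$ or, being off $K$, has both its faces of type $X_f$ and so lies strictly inside an $X_f$-sector, i.e.\ in an angle $(e',e'')$ of some $Z_i$; conversely any edge caught by such an angle borders an $X_f$-face and a vertex of $V(K)$.

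Part (iv) is then a counting corollary: the walks $Z_1,\dots,Z_m$ use each edge of $K$ once, so their total length is $|K|$, while each vertex of $V(K)$ is visited at least once, giving $|V(K)|\le|K|$ (equivalently every vertex of $H$ has even degree at least $2$). I expect the main obstacle to lie in executing (iii) rigorously and with correct conventions in the general surface: the planar picture in which $K$ bounds the region $\bigcup_{f\in X_f}\bar f$ is intuitively clear, but since the embedding of $H$ induced by $G$ need not be cellular, the argument must be carried out purely combinatorially with the rotation system, keeping the facial-walk rule, the chosen orientation, and the description of which edges lie to the left mutually consistent.
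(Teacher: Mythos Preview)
Your proposal is correct and follows essentially the same route as the paper: (i) is immediate from the definition of the edge cut, (ii) is the observation that a vertex touching faces of both types must lie on an edge of $K$ so the ``type'' cannot change along a path avoiding $V(K)$, (iii) constructs the $Z_i$ as facial walks of the subgraph induced by $K$ with the $X_f$-side on the left, and (iv) is the counting corollary. Your explicit formulation of the local alternation principle around a vertex and your remark that the induced embedding of $H$ need not be cellular (so the argument must be run via the rotation system) are useful clarifications, but they do not change the underlying strategy.
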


\begin{proof}
  Part (i) is a trivial consequence of the fact that edges $e^*\in K^*$ have one endpoint in $X_f$ and one in $X_f^c$.

  For part (ii) let $v=v_1,v_2, \dots ,v_n=v'$ be a path from $v$ to $v'$. Any vertex belonging to a face of $X_f$ as well as to a face of $X_f^c$ is contained in
  an edge of $K$, as in the cyclic order of faces around the vertex there must be edges separating $X_f$ and $X_f^c$. So $v_1$ is only in a face of $X_f$ and $v_n$ is only in a face of
  $X_f^c$. If $v_l$ is the last vertex only in a face of $X_f$, then $v_{l+1}$ is in a face of $X_f^c$ -- and as all edges incident to $v_l$ are in faces of $X_f$, so
  is the edge $\{v_l,v_{l+1}\}$ and $v_{l+1}$ is in faces of both -- $X_f$ and $X_f^c$ -- and therefore in an edge of $K$.

  For part (iii) we choose the cyclic walks as follows: we consider the components of the graph formed by $K$ as embedded subgraphs of $G$. Let $C$ be such a
  subgraph.  For each edge $e$ of $C$ we choose the facial walk with the face of $X_f$ neighbouring $e$ on the left as one of the cyclic walks. As each facial
  walk neighbours only faces of $X_f$ or only faces of $X_f^c$ on the left (otherwise these faces would be separated by an edge) and as each edge in $C$ has a face
  of $X_f$ on exactly one side, we choose each edge of $C$ for exactly one cyclic walk.

  Let now $Z$ be one of the cyclic walks chosen this way and $(e',e'')$ be an angle of $Z$.  Then edges on the left hand side -- that is: all edges occuring in
  the cyclic order after the inverse of $e'$ and before $e''$ -- belong to faces in $X_f$, as otherwise $e''$ would not be the next edge to the inverse of $e'$ in the
  order. If on the other hand an edge $e\in E(X_f)$ is incident to a vertex $v$ of $V(K)$ and $e\not\in K$, we can follow the cyclic order around $v$ in counterclockwise direction until the first
  edge in $K$ (that is the inverse of $e'$) and in clockwise direction until we find $e''$. The pair $(e',e'')$ is an angle of one of the cyclic directed walks.

  Part (iv) follows directly from (iii).

\end{proof}

We will investigate $c$-connected graphs. The more faces an embedding has, the smaller the genus, so the theoretically optimal case (which is of course not
always possible) is when all faces are triangles. For an embedded graph $G=(V,E)$ with $F$ the set of faces we call
\[f^+(G)=\displaystyle\sum_{f\in F(G)} (d(f)-3)\]

the {\em face excess}.
The smallest possible number of vertices of a $c$-connected graph is $c+1$. The vertex excess is defined as 

\[v^+(G) = |V|-(c+1) .\]

With this notation, we can formulate the following lemma, which will be used later:

\begin{lem}\label{lem:vf_excess}
  Let $c\ge 6$ be an integer and $G$ be a simple map of genus $g$ with minimum degree at least $c$, and let $v_x,f_x$ be so that $v^+(G)\ge v_x$, and $f^+(G)\ge f_x$. 

   Then $g\ge  \lceil \frac{(c-2)(c-3)}{12} +     \frac{(c-6)v_x}{12} + \frac{f_x}{6} \rceil \ge    g_{min}(c)+ \lfloor  \frac{(c-6)v_x}{12} + \frac{f_x}{6} \rfloor$.
\end{lem}

\begin{proof}
  Counting edges in each face we have $3|F(G)|+f_x\le 2|E(G)|$. As $G$ has minimum degree $c$ we have $|E(G)| \ge \frac{c|V(G)|}{2}$, and by definition
  $|V(G)| \ge c+1+v_x$.

Substituting these inequalities into Euler's formula, we get
\begin{eqnarray*}
  2-2g &\le& |V(G)| - |E(G)| + \frac{2|E(G)|-f_x}{3} =  |V(G)| - \frac{|E(G)|}{3} - \frac{f_x}{3}\le |V(G)|-\frac{c|V(G)|}{6}- \frac{f_x}{3} \\
\end{eqnarray*}
so
\begin{eqnarray*}
  g &\ge& \frac{(c-6)|V(G)|}{12}+ \frac{f_x}{6}+1 \ge \frac{(c-6)(c+1+v_x)}{12}+ \frac{f_x}{6}+1  \\
  & = & \frac{(c-2)(c-3)}{12} +     \frac{(c-6)v_x}{12} + \frac{f_x}{6} 
\end{eqnarray*}
\end{proof}

\begin{rem}\label{rem:cutsize}
  \begin{itemize}
\item Let $G=(V,E)$ be a $2$-connected graph with a $2$-cut $\{v_1, v_2\}$. 
  Then there exists a $k$-edge cut with $k \le \left\lfloor \frac{d(v_1)+d(v_2)}{2} \right\rfloor$ induced by a set $X\subset V$ containing $v_1$ and $v_2$ and all edges in the
  edge cut are incident with one of  $v_1,v_2$.

\item Let $G=(V,E)$ be a connected graph with cut-vertex $v$.
Then there exists a $k$-edge cut with $k \le \left\lfloor \frac{d(v)}{2} \right\rfloor$ induced by a set $X\subset V$ containing $v$ and all edges in the
  edge cut are incident with $v$.
\end{itemize}
\end{rem}

\begin{proof}
Let $C_1$ be the vertex set of a component of $G\backslash \{v_1, v_2\}$ (resp.\  $G\backslash \{v\}$).
Then at least one of $X = \{v_1, v_2\} \cup C_1$ (resp.\  $X = \{v\} \cup C_1$) and $X = V\backslash C_1$ has the property in the remark.
\end{proof}

In the following proofs we often have to distinguish between those dual-separating cycles $Z$ that are also separating cycles and those where all vertices in faces of one of
the parts are on $Z$. To this end we define an {\em empty $k$-circuit}, resp.\ an {\em empty $k$-pair}  as a map on at most $k$ vertices with a  spanning $k$-face $f_e$,
resp.\ a spanning set of two faces $f_e,f'_e$ with $d(f_e)+d(f'_e)=k$ not sharing an edge, spanning the graph. For empty $k$-circuits and empty $k$-pairs, it is not required that the dual is simple,
but it must not have loops and if the dual contains multi-edges, they must all contain $f_e$, resp.\ one of $f_e,f'_e$. In cases where the spanning face must be a cycle, we also write
{\em empty $k$-cycle} instead of empty $k$-circuit.
Note that empty $k$-circuits and empty $k$-pairs are embedded subgraphs of $K_{k'}$ with $k'\le k$ the number of vertices of the map.
In the context of empty circuits and pairs we call edges not on $f_e$ or $f'_e$ {\em internal}.

\pagebreak[2]

\begin{rem}\label{rem:emptyfaces}~
    
   \begin{description}

  \item[(i):] For $k\in \{3,4,5\}$ there are no empty $k$-circuits with $f_e$ neighbouring $k$ pairwise different faces.

   \item[(ii):] For $k\in \{3,4,5\}$ there are no empty $k$-circuits containing a face $f$ not sharing an edge with $f_e$.

  \item[(iii):]  Empty $6$-circuits with $f_e$ neighbouring $6$ different faces have $6$ vertices and at least $13$ edges.

  \item[(iv):]  Empty $6$-circuits with at least $3$ faces and $f_e$ neighbouring only one face $f$ do not exist.

   \item[(v):]  An empty $6$-pair with $f_e,f'_e$ neighbouring $6$ different faces has $6$ vertices and at least $12$ edges.

    \item[(vi):] An empty $6$-pair with at least $4$ faces, so that $f_e,f'_e$ share only edges with the same face $f$
      does not exist.

         \item[(vii):]  Empty $7$-circuits with $f_e$ neighbouring $7$ different faces have $7$ vertices and at least $15$ edges.
 
    \item[(viii):] Empty $7$-circuits with at least $3$ faces, and $f_e$ neighbouring only one face $f$ do not exist.

   \item[(ix):]  An empty $7$-pair with $f_e,f'_e$ neighbouring $7$ different faces has $7$ vertices and at least $14$ edges.

    \item[(x):] An empty $7$-pair with at least $4$ faces, so that $f_e,f'_e$ share only edges with the same face $f$
        does not exist.

       \end{description}
\end{rem}

\begin{proof}

  (i) Such empty circuits would have genus at least one, as otherwise they would be outerplanar graphs and therefore have a vertex of degree 2, so that the two incident edges
  would be contained in the same two faces. As under the assumptions in (i) the empty circuit would have at least $k+1$ faces and maximum degree at most $4$,
  we get $|V|+|F|>|E|$ and therefore 
  $|V|-|E|+|F|>0$ in contradiction with genus at least $1$.

  (ii) The edges of $f$ must form a cycle of edges not on $f_e$, which implies that the minimum degree of the vertices on $f$ must be $4$. This only leaves
  $k=5$, where the only possible face $f$ is another $5$-cycle. As these two cycles contain all edges of $K_5$ and as $f$ must be adjacent to $5$ different faces, the (at least $10$)
  edges of these faces not in $f$ must be in $f_e$, which is a pentagon -- a contradiction.

  (iii) Again such an empty circuit can not be plane, so we have $|V|-|E|+|F|\le 0$ and $|V|\le 6$, $|F|\ge 7$. For $|V|=5$ we get $|E|\ge 12$ -- a contradiction -- and for $|V|=6$ we get the result.

  (iv) To avoid double edges in the dual, vertices with internal edges have exactly three internal edges, which implies immediately that no two vertices of the
  $6$-face are the same, as in that case there were only $5$ vertices and a vertex with degree $5$. Furthermore it implies that if such an empty $6$-circuit
  existed, it would be an embedded $K_6$.  So we have $6$ oriented edges leaving $f_e$ in the cyclic walk of $f$ and therefore also $6$ different faces not sharing an
  edge with $f_e$. These $6$ faces can contain at most $30-(6+12)=12$ oriented edges -- a contradiction.

  (v) We have $5$ or $6$ vertices and at least $8$ faces, so we have $5-|E|+8\le 2$, so $|E|\ge 11$ (a contradiction) or $6-|E|+8\le 2$ and therefore $|E|\ge 12$.

  (vi) As there are at least $4$ faces, we get analogously to (iv) that there is a vertex with $3$ internal edges, implying that $f_e$ and $f'_e$ are disjoint
  and that a possible empty pair would be an embedding of $K_6$. Now the same computation as in (iv) gives a contradiction.

  (vii) If the empty $7$-circuit contained a vertex twice, it had at most $6$ vertices, so maximum degree $5$ and the vertex occurring twice had at most one internal edge, so that
  two edges of $f_e$ would neighbour the same face. So we have $7$ vertices, at least $8$ faces, and $7-|E|+|F|\le 0$, implying the result.

  (viii) In case of (at least) two vertices in the facial walk of the $7$-gon being the same vertex $v$, this vertex can not have interior edges, as it would need $3$ interior
  edges in that place, giving a total degree of at least $7$ while we have at most $6$ vertices. On the other hand, there must be a vertex with interior edges
  and as each of these vertices must have $3$ interior edges, we have $6$ vertices and the boundary consists of a $3$-cycle and a $4$-cycle sharing a vertex. In
  this configuration, for each neighbour of $v$ there are exactly $3$ vertices to which the internal edges can go without creating double edges. This implies
  that the only vertex that is not a neighbour of $v$ has also interior edges -- but it has only $2$ possible neighbours, as $v$ can not have internal edges. So
  $f_e$ is a simple 7-cycle.

  If the $7$-cycle $f_e$ has a vertex $v$ without interior edges, we can argue similar to (iv): as there must be a vertex with an interior edge --
  and therefore at least $3$ interior edges -- a vertex at distance $2$ of $v$ in the $7$-cycle must have an interior edge, as the $4$ vertices different from $v$ and the vertices at distance $2$ of $v$
  would have to form a $K_4$ with the interior edges, while one
  of the edges is already present on $f_e$. This again implies that all
  vertices but $v$ must have interior edges. So $f$ is at least a $13$-gon and the graph is a subgraph $G'$ of $K_6$ with an additional vertex connected to $2$
  vertices of $G'$. So the graph has at most $34$ oriented edges. The faces different from $f$ and $f_e$ together have at most $34-(7+13)=14$ oriented edges, so there are at most
  $4$ of these faces and one of them must share at least $2$ edges with $f$ -- a contradiction.
  
  In case of $f_e$ being a 7-cycle and all vertices having interior edges, there are $7$ interior oriented edges in the cyclic walk of $f$, so that $f$ is at
  least a $14$-gon. The faces $f$ and $f_e$ together have at least $21$ oriented edges, so there are at most $21$ oriented edges left for the other faces. If
  the set of other faces is $F_o$, we have $|F_o|< 7$ if $|E|<21$ or if $f$ contains more than $14$ oriented edges,
  and $|F_o|\le 7$ if $|E|=21$.  If $|E|=21$ we have an embedding of $K_7$, so if $|F_o|=7$, 
  together with $f$ and $f_e$ there are exactly $9$ faces and the Euler characteristic is an odd number. So also in this case we have $|F_o|< 7$ and we again have a double edge between $f$ and a
  vertex corresponding to a face in $F_o$.
  
   \begin{figure}
     \begin{center}
       \includegraphics[width=55mm]{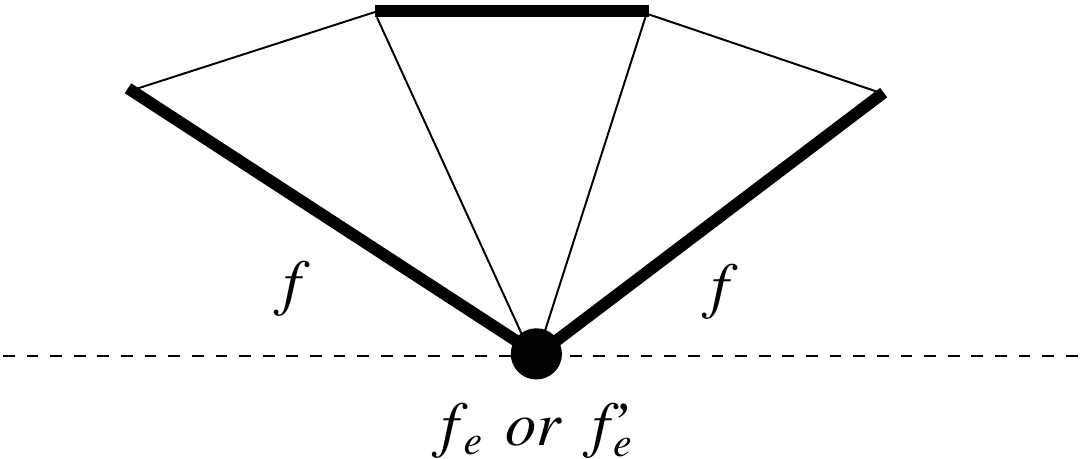}
     \end{center}
     \caption{The rotational order around the vertices in case (x) for an embedded $K_7$. Lines between $f_e$ or $f'_e$ and $f$ are dashed and lines between $f$ and faces in $F_o$ are bold.}\label{fig:viii}
   \end{figure}

  (ix)  In case of the two faces sharing a vertex, the common vertex would have degree at least $6$, while there are only $6$ vertices, so we have $7$ vertices and at least $9$ faces, so we have $7-|E|+9\le 7-|E|+|F| \le 2$ and therefore $|E|\ge 14$.
  
   (x) The arguments and computations are until the point when we have an embedding of $K_7$ as the last open case -- except for using $f_e,f'_e$ instead of just
   $f_e$ -- very similar to those in the proof of (viii), so we will not repeat them.  At that point we have that $|F_o|=7$, that $f$ is a $14$-gon, and that
   each face in $F_o$ is a triangle and shares one edge with $f$ and two edges with other faces from $F_o$. Using the fact that the vertices have degree $6$, we
   get the situation depicted in Figure~\ref{fig:viii}.  For the map formed by the vertices and edges of $F_o$ we have $7$ vertices, $14$ edges and at least $8$
   faces: next to the $7$ faces in $F_o$ the one or two faces formed by the $7$ interior edges in the facial walk of $f$. Due to the parity of the Euler
   characteristic we get that these $7$ oriented edges form two faces: a triangle $t$ and a quadrangle $q$.  Each vertex is contained in $2$ triangles of $F_o$
   with an edge in one of $t,q$ and one with an edge in the other (as $t,q$ are disjoint).  As each vertex of $t$ is only contained in one triangle with an edge
   in $q$, at least one of the $4$ triangles in $F_o$ with an edge in $q$ can not have the third vertex in $t$ -- a contradiction.
  
\end{proof}

\section{Guaranteeing 3-connectedness of the dual}

In order to allow small cuts in a simple dual of a simple graph, the sizes of faces and how they are situated with respect to each other play a crucial
role. In this chapter we want to determine what the minimum sum of the sizes of two faces of a $c$-connected simple map is, so that the dual can be simple and
have a $2$-cut. In fact we will prove a stronger version only considering faces that intersect in a special way. To this end we define

\[min^2_f(c)=\left\{ \begin{array}{cc}
    7 & \mbox{ if } c\in \{1,2\} \\
   10 & \mbox{ if } c\in\{3,4\} \\
  12 & \mbox{ if }  c \ge 5 \\
 \end{array}
 \right. .
 \]

 We call two faces {\em doubly intersecting}, if in the facial walk of one of the faces one or more vertices of the other occur at at least two places. So especially two faces
 sharing $2$ or more different vertices are doubly intersecting.
We will now state and prove our main theorem:

\begin{thm}\label{thm:mainthm}
  ~
  
  \vspace*{-0.2cm}
  
   \begin{description}

   \item[(i):] For each $c$ there are $c$-connected maps $G$ with two doubly intersecting faces $f,f'$, so that $d(f)+d(f') =min^2_f(c)$, $d(f'')=3$ for all $f''\not\in \{f,f'\}$,
     and the dual $G^*$ is simple and has a 2-cut.

   \item[(ii):] If $G=(V,E)$ is a $c$-connected map with a simple dual $G^*$ and for every two doubly intersecting faces $f,f'$ we have $d(f)+d(f') <min^2_f(c)$, then $G^*$ is 3-connected.
  
     \end{description}
 
\end{thm}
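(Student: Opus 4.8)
The plan is to prove (ii) by contraposition, producing from any failure of $3$-connectivity of $G^*$ a pair of doubly intersecting faces $f,f'$ with $d(f)+d(f')\ge min^2_f(c)$. If $G^*$ is not $3$-connected it has either a cut vertex or (being $2$-connected) a $2$-cut, and in both cases Remark~\ref{rem:cutsize} converts this into a dual-separating edge set $K$ all of whose edges lie on the boundaries of the one or two cut faces, with $|K|\le\lfloor(d(f)+d(f'))/2\rfloor$ for a $2$-cut and $|K|\le\lfloor d(f)/2\rfloor$ for a cut vertex. Lemma~\ref{lem:cut_dual} then decomposes $K$ into facial cyclic walks $Z_1,\dots,Z_m$ with $\sum_i d(Z_i)=|K|$ and, crucially, $|V(K)|\le|K|$. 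Since each $Z_i$ has length at least $3$, reaching $m\ge 3$ already forces $|K|\ge 9$ and hence $d(f)+d(f')\ge 18$, far above every threshold; so for the purpose of the bound only $m\in\{1,2\}$ need to be examined.

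Everything now reduces to bounding $|K|$ from below, and Lemma~\ref{lem:cut_dual}(ii) gives the governing dichotomy. If there are vertices of $G$ off $V(K)$ on both sides of the cut, then $V(K)$ is a genuine cut-set of the $c$-connected $G$, so $|V(K)|\ge c$ and $|K|\ge c$. Otherwise one side carries all its vertices on $V(K)$; capping that side along $Z_1,\dots,Z_m$ yields an embedded map all of whose vertices lie on the $m$ cap faces, that is, an empty $|K|$-circuit (if $m=1$) or an empty $|K|$-pair (if $m=2$). Parts (i) and (ii) of Remark~\ref{rem:emptyfaces} exclude empty circuits of size at most $5$, and an empty pair has $|K|=d(f_e)+d(f'_e)\ge 6$; hence in the empty case $|K|\ge 6$ and $d(f)+d(f')\ge 12$. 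Combined with the genuine-cut bound $d(f)+d(f')\ge 2|K|\ge 2c$, this already settles every $c\ge 6$ cleanly, and settles all $c$ whenever the cut is empty.

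The hard part is the complementary regime: a genuine cut for small connectivity, where $|K|\ge c$ only yields $d(f)+d(f')\ge 2c$, strictly below the required $7$ (for $c\le 2$), $10$ (for $c\in\{3,4\}$) and $12$ (for $c=5$). Here I would study the sub-map on the side \emph{carrying} $f$ and $f'$: capping its boundary walks produces a map whose only non-triangular, non-cap faces are $f$ and $f'$, and whose boundary vertices have degree at least $c$ in $G$. A direct Euler-characteristic and degree count on this patch — in spirit the computation of Lemma~\ref{lem:vf_excess} and of the cases in Remark~\ref{rem:emptyfaces}, but now carried out with $f,f'$ present — shows that fitting $f$, $f'$ and the surrounding triangles into a region of boundary length at most $\lfloor(d(f)+d(f'))/2\rfloor$ is impossible once $d(f)+d(f')$ falls below the stated value; the prototype is $d(f)+d(f')=6$, where $f,f'$ would be two triangles that cannot tile the triangular disk they must bound. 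In parallel, for every $2$-cut I would extract the doubly intersecting conclusion: each walk $Z_i$ splits into maximal arcs of $\partial f$-edges and $\partial f'$-edges, and since $Z_i$ closes up these arcs alternate an even, positive number of times, with each transition occurring at a vertex common to $f$ and $f'$; this yields at least two coincidences of $f$ with $f'$, i.e.\ doubly intersecting faces. I expect this case analysis — pinning the exact constants $7,10,12$, disposing of the degenerate short walks, and producing a doubly intersecting pair in the cut-vertex variant (where $K$ lies entirely on $\partial f$ and the second face must be recovered from the pinch $f$ makes on itself) — to be the main technical obstacle.

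For part (i) I would, for each $c$, build an explicit witness: starting from a suitable $c$-connected triangulation, I would alter one small patch so that precisely two doubly intersecting faces $f,f'$ with $d(f)+d(f')=min^2_f(c)$ appear while all remaining faces stay triangular, and so that the two corresponding vertices of $G^*$ form a $2$-cut; it then remains to verify $c$-connectivity of $G$, simplicity of $G^*$, and that deleting these two dual vertices disconnects $G^*$, matching the bound of part (ii).
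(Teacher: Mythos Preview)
Your framework is sound---Remark~\ref{rem:cutsize} plus Lemma~\ref{lem:cut_dual} is exactly the right entry point, and the dichotomy ``genuine cut of $G$'' versus ``one side empty'' is implicitly present in the paper too. But several steps that you treat as routine are where the actual work lies, and one of them is genuinely wrong as stated.

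\textbf{The doubly-intersecting conclusion does not follow from your alternation argument.} If $m=2$ and $Z_1$ consists entirely of edges of $f$ while $Z_2$ consists entirely of edges of $f'$, there is no alternation on either walk, and the cut faces $f,f'$ need not be doubly intersecting at all. This case really occurs, and the paper handles it separately (the paragraphs beginning ``The last case is that $d(f)+d(f')>9$'' for $c\in\{3,4\}$ and ``The last remaining case is that there are two cut-faces $f,f'$ and $d(f)+d(f')>11$'' for $c\ge 5$). There the contradiction is obtained not from the pair $(f,f')$ but by forcing one cut face to be large enough that together with \emph{any edge-neighbour} (which is automatically doubly intersecting with it) the sum exceeds the threshold. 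Your outline never produces this alternative pair, so even your ``clean'' $c\ge 6$ case has a hole.

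\textbf{``Empty side implies $|K|\ge 6$'' is not a direct consequence of Remark~\ref{rem:emptyfaces}(i),(ii).} Those items have hypotheses: (i) needs $f_e$ to border $k$ \emph{pairwise different} faces, and (ii) needs a face disjoint from $f_e$. When the empty side is $X$ (the side containing the cut faces) you do get a face of $X^*\setminus\{f,f'\}$ not bordering $f_e$, and (ii) applies. But when the empty side is $(X^*)^c$, a face there may border $f_e$ twice (once via $f$, once via $f'$), so (i) can fail, and every face there may touch $f_e$, so (ii) can fail too; think of the plane map $K_4$ minus an edge, which is an empty $4$-circuit with two triangles each meeting $f_e$ twice. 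The paper uses (ii) only to pin down \emph{which} side is empty and then does a direct edge-count on the configuration rather than invoking a blanket $|K|\ge 6$.

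\textbf{The small-$c$ work is the proof, not an afterthought.} Your proposed ``Euler-characteristic and degree count on the patch'' is not what the paper does and it is not clear it would close the gap; the exact constants $7,10,12$ come from counting how many vertices of the separating cycle must carry interior edges (using $c$-connectivity and the three vertex-disjoint paths to an interior vertex), and then tracing the facial walks of $f$ and $f'$ through those interior edges, controlling which pairs of outgoing/incoming edges can coincide without creating loops or multi-edges in $G^*$. For $c\ge 5$ this becomes the detailed $3$-, $4$-, $5$-cycle analysis with the auxiliary digraph of Figure~\ref{fig:5cyc}. None of this is captured by a patch Euler formula.

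Finally, part (i) requires explicit maps; the paper gives them (Figures~\ref{fig:c2minf_7}, \ref{fig:4conexcess4}, and the $H$-operation from \cite{BBZ} for $c\ge 5$), and there is no shortcut.
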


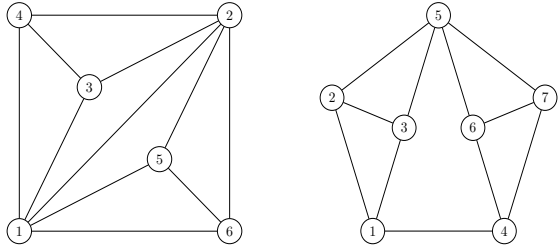
\begin{figure}[tb]
  \begin{center}
  \begin{minipage}{4cm}
\begin{center}	
  \resizebox{0.8\textwidth}{0.8\textwidth}
        {
           \begin{tikzpicture}[scale=0.07]
\node [circle,black,draw,scale=1.70] (1) at (-70.71068,-70.71068) {1};
\node [circle,black,draw,scale=1.70] (2) at (70.71068,70.71068) {2};
\node [circle,black,draw,scale=1.70] (3) at (-23.57023,23.57023) {3};
\node [circle,black,draw,scale=1.70] (4) at (-70.71068,70.71068) {4};
\node [circle,black,draw,scale=1.70] (5) at (23.57023,-23.57023) {5};
\node [circle,black,draw,scale=1.70] (6) at (70.71068,-70.71068) {6};
\draw [black] (1) to (3);
\draw [black] (1) to (2);
\draw [black] (1) to (5);
\draw [black] (1) to (6);
\draw [black] (1) to (4);
\draw [black] (2) to (3);
\draw [black] (2) to (4);
\draw [black] (2) to (6);
\draw [black] (2) to (5);
\draw [black] (3) to (4);
\draw [black] (5) to (6);
\end{tikzpicture}
        }
\end{center}
  \end{minipage}
  \begin{minipage}{4cm}
\begin{center}	
  \resizebox{0.8\textwidth}{0.8\textwidth}
        {
          \begin{tikzpicture}[scale=0.07]
\node [circle,black,draw,scale=2.20] (1) at (-58.77853,-80.90170) {1};
\node [circle,black,draw,scale=2.20] (2) at (-95.10565,30.90170) {2};
\node [circle,black,draw,scale=2.20] (3) at (-30.71669,5.69997) {3};
\node [circle,black,draw,scale=2.20] (4) at (58.77853,-80.90170) {4};
\node [circle,black,draw,scale=2.20] (5) at (-0.00000,100.00000) {5};
\node [circle,black,draw,scale=2.20] (6) at (30.71669,5.69997) {6};
\node [circle,black,draw,scale=2.20] (7) at (95.10565,30.90170) {7};
\draw [black] (1) to (2);
\draw [black] (1) to (3);
\draw [black] (1) to (4);
\draw [black] (2) to (5);
\draw [black] (2) to (3);
\draw [black] (3) to (5);
\draw [black] (4) to (6);
\draw [black] (4) to (7);
\draw [black] (5) to (7);
\draw [black] (5) to (6);
\draw [black] (6) to (7);
\end{tikzpicture}
        }
\end{center}
  \end{minipage}
  \end{center}
	\caption{ A 2-connected map and its dual. The map has two faces $f,f'$ that share two vertices, so that $d(f)+d(f')= 7$ and $d(f'')=3$ for all $f''\not\in \{f,f'\}$. The dual is simple and has a 2-cut.}
	\label{fig:c2minf_7}
\end{figure}

\begin{figure}[tb]
\begin{minipage}{8cm}
\begin{center}	
  \resizebox{0.7\textwidth}{0.7\textwidth}
        {
          \begin{tikzpicture}[scale=0.065]
\node [circle,black,draw,scale=1.40] (1) at (25.03581,37.57013) {1};
\node [circle,black,draw,scale=1.40] (2) at (-35.47443,-20.50699) {2};
\node [circle,black,draw,scale=1.40] (3) at (11.22351,-9.66530) {3};
\node [circle,black,draw,scale=1.40] (4) at (53.85929,-6.46749) {4};
\node [circle,black,draw,scale=1.40] (5) at (5.36809,-53.29914) {5};
\node [circle,black,draw,scale=1.40] (6) at (-12.19856,46.01569) {6};
\node [circle,black,draw,scale=1.40] (7) at (-57.28030,10.01382) {7};
\node [circle,black,draw,scale=1.40] (8) at (-49.13282,49.78166) {8};
\tkzDefPoint(-92.38795,38.26834){9}
\tkzDefPoint(-70.71068,-70.71068){10}
\tkzDefPoint(-100.00000,-0.00000){11}
\tkzDefPoint(-92.38795,-38.26834){12}
\tkzDefPoint(-38.26834,-92.38795){13}
\tkzDefPoint(0.00000,-100.00000){14}
\tkzDefPoint(38.26834,-92.38795){15}
\tkzDefPoint(92.38795,38.26834){16}
\tkzDefPoint(100.00000,0.00000){17}
\tkzDefPoint(92.38795,-38.26834){18}
\tkzDefPoint(-38.26834,92.38795){19}
\tkzDefPoint(-0.00000,100.00000){20}
\tkzDefPoint(38.26834,92.38795){21}
\tkzDefPoint(-70.71068,70.71068){22}
\tkzDefPoint(70.71068,70.71068){23}
\tkzDefPoint(70.71068,-70.71068){24}
\draw [black] (1) to (3);
\draw [black] (1) to (2);
\draw [black] (1) to (6);
\draw [black] (1) to (21);
\node [draw=none,fill=none,scale=1.40] () at (40.18176,97.00735) {5};
\draw [black] (1) to (4);
\draw [black] (2) to (3);
\draw [black] (2) to (5);
\draw [black] (2) to (12);
\node [draw=none,fill=none,scale=1.40] () at (-97.00735,-40.18176) {4};
\draw [black] (2) to (7);
\draw [black] (2) to (6);
\draw [black] (3) to (4);
\draw [black] (3) to (5);
\draw [black] (4) to (16);
\node [draw=none,fill=none,scale=1.40] () at (97.00735,40.18176) {8};
\draw [black] (4) to (17);
\node [draw=none,fill=none,scale=1.40] () at (105.00000,0.00000) {7};
\draw [black] (4) to (18);
\node [draw=none,fill=none,scale=1.40] () at (97.00735,-40.18176) {2};
\draw [black] (4) to (5);
\draw [black] (5) to (15);
\node [draw=none,fill=none,scale=1.40] () at (40.18176,-97.00735) {1};
\draw [black] (5) to (14);
\node [draw=none,fill=none,scale=1.40] () at (0.00000,-105.00000) {6};
\draw [black] (5) to (13);
\node [draw=none,fill=none,scale=1.40] () at (-40.18176,-97.00735) {8};
\draw [black] (6) to (7);
\draw [black] (6) to (8);
\draw [black] (6) to (20);
\node [draw=none,fill=none,scale=1.40] () at (-0.00000,105.00000) {5};
\draw [black] (7) to (11);
\node [draw=none,fill=none,scale=1.40] () at (-105.00000,-0.00000) {4};
\draw [black] (7) to (8);
\draw [black] (8) to (9);
\node [draw=none,fill=none,scale=1.40] () at (-97.00735,40.18176) {4};
\draw [black] (8) to (19);
\node [draw=none,fill=none,scale=1.40] () at (-40.18176,97.00735) {5};
\tkzDefPoint(-68.55786,72.79986){A}
\tkzDefPoint(68.55786,72.79986){B}
\tkzDefPoint(0.0,0.0){C}
\tkzDrawArc[<-,line width=0.9mm, red](C,B)(A)
\tkzDefPoint(72.79986,68.55786){A}
\tkzDefPoint(72.79986,-68.55786){B}
\tkzDefPoint(0.0,0.0){C}
\tkzDrawArc[<-,line width=0.9mm, blue](C,B)(A)
\tkzDefPoint(68.55786,-72.79986){A}
\tkzDefPoint(-68.55786,-72.79986){B}
\tkzDefPoint(0.0,0.0){C}
\tkzDrawArc[->,line width=0.9mm, red](C,B)(A)
\tkzDefPoint(-72.79986,-68.55786){A}
\tkzDefPoint(-72.79986,68.55786){B}
\tkzDefPoint(0.0,0.0){C}
\tkzDrawArc[->,line width=0.9mm, blue](C,B)(A)
\node [black,circle,draw,fill=white,scale=0.75,line width=1mm] (10) at (-70.71068,-70.71068) {};
\node [black,circle,draw,fill=white,scale=0.75,line width=1mm] (22) at (-70.71068,70.71068) {};
\node [black,circle,draw,fill=white,scale=0.75,line width=1mm] (23) at (70.71068,70.71068) {};
\node [black,circle,draw,fill=white,scale=0.75,line width=1mm] (24) at (70.71068,-70.71068) {};
\end{tikzpicture}
        }
\end{center} 
\end{minipage}\hfill
\begin{minipage}{8cm}
\begin{center}	
        \resizebox{0.7\textwidth}{0.7\textwidth}
        {
          \begin{tikzpicture}[scale=0.065]
\node [circle,black,draw,scale=1.40] (1) at (9.49391,-8.20535) {1};
\node [circle,black,draw,scale=1.40] (2) at (45.30382,11.20229) {2};
\node [circle,black,draw,scale=1.40] (3) at (-12.58107,-44.96505) {3};
\node [circle,black,draw,scale=1.40] (4) at (2.00636,9.80395) {4};
\tkzDefPoint(-70.71068,-70.71068){5}
\node [circle,black,draw,scale=1.40] (6) at (39.65540,-40.10693) {6};
\node [circle,black,draw,scale=1.40] (7) at (-24.29019,-0.40595) {7};
\node [circle,black,draw,scale=1.40] (8) at (22.40516,46.76998) {8};
\node [circle,black,draw,scale=1.40] (9) at (-50.73203,-26.28935) {9};
\node [circle,black,draw,scale=1.40] (10) at (-21.91272,51.21376) {10};
\node [circle,black,draw,scale=1.40] (11) at (-54.80374,24.43763) {11};
\node [circle,black,draw,scale=1.40] (12) at (-30.77933,26.84002) {12};
\tkzDefPoint(-70.71068,70.71068){13}
\tkzDefPoint(70.71068,70.71068){14}
\tkzDefPoint(70.71068,-70.71068){15}
\draw [black] (1) to (2);
\draw [black] (1) to (3);
\draw [black] (1) to (4);
\draw [black] (2) to (14);
\draw [black] (2) to (6);
\draw [black] (3) to (6);
\draw [black] (3) to (5);
\draw [black] (4) to (7);
\draw [black] (4) to (8);
\draw [black] (5) to (9);
\draw [black] (6) to (15);
\draw [black] (7) to (9);
\draw [black] (7) to (12);
\draw [black] (8) to (10);
\draw [black] (8) to (14);
\draw [black] (9) to (11);
\draw [black] (10) to (13);
\draw [black] (10) to (12);
\draw [black] (11) to (13);
\draw [black] (11) to (12);
\tkzDefPoint(-78.55637,-66.08825){A}
\tkzDefPoint(-74.15637,67.08825){B}
\tkzDefPoint(0.0,0.0){C}
\tkzDrawArc[->,line width=0.9mm, blue](C,B)(A)
\tkzDefPoint(-67.08825,74.15637){A}
\tkzDefPoint(65.08825,76.15637){B}
\tkzDefPoint(0.0,0.0){C}
\tkzDrawArc[<-,line width=0.9mm, red](C,B)(A)
\tkzDefPoint(74.15637,67.08825){A}
\tkzDefPoint(76.15637,-65.08825){B}
\tkzDefPoint(0.0,0.0){C}
\tkzDrawArc[<-,line width=0.9mm, blue](C,B)(A)
\tkzDefPoint(65.08825,-76.15637){A}
\tkzDefPoint(-67.08825,-74.15637){B}
\tkzDefPoint(0.0,0.0){C}
\tkzDrawArc[->,line width=0.9mm, red](C,B)(A)
\node [circle,black,draw,fill=white,scale=1.40] (5) at (-70.71068,-70.71068) {5};
\node [circle,black,draw,fill=white,scale=1.40] (13) at (-70.71068,70.71068) {5};
\node [circle,black,draw,fill=white,scale=1.40] (14) at (70.71068,70.71068) {5};
\node [circle,black,draw,fill=white,scale=1.40] (15) at (70.71068,-70.71068) {5};
\end{tikzpicture}
        }
\end{center}
\end{minipage}
	\caption{ A 4-connected map and its dual. The map has two faces $f,f'$ that share two vertices, so that $d(f)+d(f') = 10$ and $d(f'')=3$ for all $f''\not\in \{f,f'\}$. The dual is simple and has a 2-cut.}
	\label{fig:4conexcess4}
\end{figure}
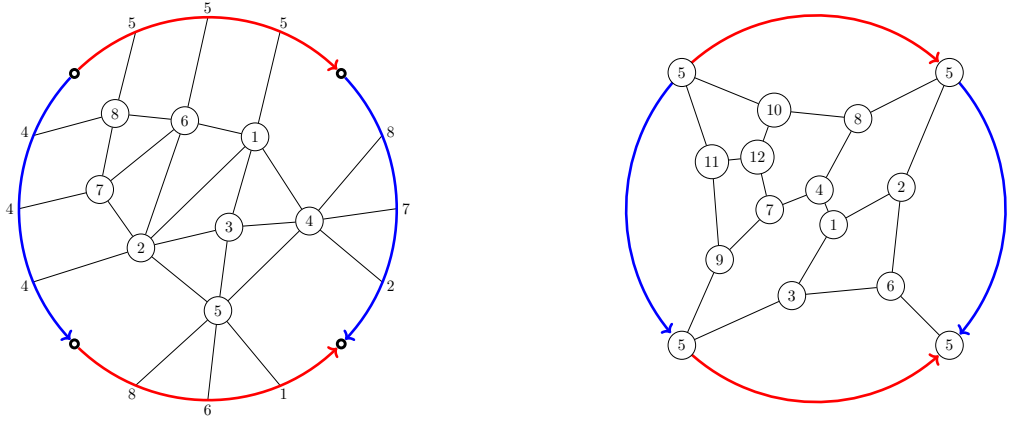

\begin{proof}

  For part (i) and $c\le 4$, examples are given in Figure~\ref{fig:c2minf_7} and Figure~\ref{fig:4conexcess4}. As shown in  \cite{BBZ}, for $c\ge 5$ the $H$-operation described in
  \cite{BBZ} can produce embeddings of complete graphs for arbitrarily large $c$ that are triangular except for two faces of size $6$ sharing four
  vertices. These maps have a simple dual with a 2-cut.

  For part (ii) and $c\in \{1,2\}$, note that simple triangulations of any surface are $3$-connected, so that at least one face of size $d(f)\ge 4$ is needed to have a $1$- or $2$-cut,
  proving the case for maps with a $1$- or $2$-cut. For $1$- and for $2$-connected graphs that are also $3$-connected, the result follows from the stronger result for $c=3$.

  For $c\in \{3,4\}$, it is sufficient to prove the result for $c=3$. So assume that we have a $3$-connected graph $G$ with a simple dual, $d(f)+d(f') \le 9$
  for any two doubly intersecting faces, and that $G^*$ has a $1$- or $2$-cut $C^*$. Assume first that $G^*$ has a $1$-cut or a $2$-cut $C^*$ with $d(f)+d(f')
  \le 9$ for the two vertices in $C^*$ -- no matter whether they are doubly intersecting or not. As each face can have size at most six -- as neighbouring faces
  have size at least three -- we get by Remark \ref{rem:cutsize} for 1-cuts as well as for 2-cuts that there exists a set $X^*$ containing $C^*$, so that
  $E(X^*,(X^*)^c)$ is a $k$-edge cut in $G^*$ with $k \le 4$, in which all the edges are incident to a vertex of $C^*$. Let $X$ be the map formed by all
  vertices and edges of faces of $G$ that are in $X^*$. We refer to the faces in $X$ that correspond to vertices in $C^*$ as {\em cut-faces}. Due to
  Lemma~\ref{lem:cut_dual}, the cycle $Z$ corresponding to $E(X^*,(X^*)^c)$ is a $3$- or $4$-cycle and each edge of $Z$ is contained in one of the faces of
  $C^*$. As $X^*$ contains a vertex not in $C^*$, due to Remark~\ref{rem:emptyfaces} (ii) the map $X$ is not an empty cycle. It contains a vertex $v$ and as
  each vertex on $Z$ has three vertex disjoint paths to $v$, at least $3$ vertices on $Z$ are incident to edges in the cut-faces not on $Z$ -- we call those
  interior edges. In case of $Z$ a $3$-gon, there are no edges between vertices of $Z$ except for those on $Z$. So (compare Figure~\ref{fig:4cyc}) all paths in
  the facial walks of $f,f'$ e.g. leaving $Z$ with the internal edge $e_{1,R}$ have length at least two before reaching another vertex of $Z$.  This gives a
  minimum length of $9$ for the sum of the lengths of the paths. In case of only one cut-face, this is already a contradiction.  In case of two cut-faces, at
  least one of the paths must return to the same vertex (e.g. start at $e_{1,R}$ and enter at $e_{3,L}$), which needs a path of length at least $3$, so a total
  length of at least $10$.
  
  In case of $Z$ a $4$-gon and three vertices with interior edges, we can argue similar to the case of a $3$-gon with one subdivided edge:
  now one of the outgoing interior edges can go directly to $Z$, but only one orientation of this edge can be part of a cut-face, as otherwise the vertices not in this edge would form a 2-cut.
  So the length of the paths between vertices of $Z$, but not on $Z$, can be one shorter, while $Z$ is one edge longer -- and we get the same contradiction.
  
  In case of $4$ vertices with interior edges, we have also $4$ outgoing interior paths -- and at most two of them can have length $1$, as again none of these
  edges can be in two outgoing interior paths without having a $2$-cut. This gives a total length of at least $10$ -- a contradiction proving the case $c\in
  \{3,4\}$ for a $1$-cut or a $2$-cut $C^*$ with $d(f)+d(f') \le 9$.

  The last case is that $d(f)+d(f') > 9$. In this case we still have $d(f)+d(f') \le 12$ as the maximum face size is six and we know that $f$ and $f'$ are not
  doubly intersecting.  Then, by Remark~\ref{rem:cutsize}, there exists a set $X^*$ containing $C^*$, so that $E(X^*,(X^*)^c)$ is a $k$-edge cut with $k \le 6$
  in $G^*$, in which each edge is incident to a vertex of $C^*$. By Lemma~\ref{lem:cut_dual} and as $G$ is simple, these edges correspond to a dual separating
  cyclic walk or two cyclic walks $Z,Z'$ in $G$ (as described in the proof of Lemma~\ref{lem:cut_dual}) that are dual separating.  If one of the cyclic walks
  (say $Z_1$) would contain edges of both cut-faces -- so especially if we only have one cyclic walk, $f$ and $f'$ would be doubly intersecting: there would be
  at least two different places in the cyclic walk where edges of $f$ and edges of $f'$ meet. If these two places are at different vertices, $f$ and $f'$ are
  doubly intersecting, so assume to the contrary that there is a directed subpath of edges of $f$ in $Z_1$ starting and ending at the same vertex $v$. If $e$ is
  the first directed edge of the subpath (so it is starting in $v$) and $e'$ is the last edge (so it is ending in $v$), by construction $e,e'$ can not be an
  angle of the facial walk of $f$ -- as otherwise it would also be the next edge in $Z_1$, so these two occurences are at different positions in the facial walk
  of $f$ and again we have the contradiction that $f$ and $f'$ are doubly intersecting.

  To this end we have that $Z$ is neighbouring $f$ and $Z'$ is neighbouring $f'$ on one side and both are $3$-cycles. We will first show that the faces of $X^*$
  contain a vertex not on $f$ or $f'$: As $f$ has also neighbours in $X^*$, there exists an edge of $f$ that contains a vertex $v$ of $Z$, but is not on $Z$. As a single such edge would
  imply a loop in $G^*$, there are at least two such edges at $v$ and at least one must have the other vertex $v'$ not on $Z'$ as $f,f'$ are not doubly
  intersecting. So $v'$ is in a face of $X^*$, but neither on $Z$ nor on $Z'$. So there are $3$ vertex disjoint paths from $v'$ to the set of vertices of $Z$ and of $Z'$
  in $G$. Taking them as short as possible, they are entirely in faces of $X^*$, so at least one of $Z,Z'$ -- say $Z$ -- has at least two vertices with internal edges. As at each of
  these vertices we have at least $2$ internal edges belonging to $f$ and all $4$ of these edges are pairwise distinct, $f$ has at least size $7$ -- a final contradiction proving the
  statement for $c=3$.

  \medskip

  For $c\ge 5$ assume that we have a $5$-connected graph $G$ with a simple dual, $d(f)+d(f') \le 11$ for any two faces doubly intersecting $f,f'$, and the dual
  $G^*$ is simple and has a $1$- or $2$-cut $C^*$. Assume first that we have a 1-cut or that we have a 2-cut with $d(f)+d(f') \le 11$ for the two faces $f,f'$
  corresponding to the cut-vertices (without assuming that they are doubly intersecting at this point).  As each face can have size at most eight, we get by Remark
  \ref{rem:cutsize} for 1-cuts as well as for 2-cuts that there exists a set $X^*$ containing $C^*$, so that $E(X^*,(X^*)^c)$ is a $k$-edge cut in $G^*$ with
  $k  \le 5$, in which all the edges are incident to a vertex of $C^*$. By Lemma~\ref{lem:cut_dual} and as $G$ is simple, these edges correspond to a 3-, 4- or
  5-cycle $Z$ in $G$ neighbouring the set $C$ of one or two faces corresponding to $C^*$ on one side. We write $X$ for the map of all vertices and edges of
  faces in $X^*$. In case of a $2$-cut and if $Z$ contains edges of both faces, we also have that they are doubly intersecting.

  We will use the following observation:

\begin{itemize}
	\item As $G^*$ is simple, each face in $(X^*)^c$ can contain at most two edges of $Z$, as on the side of $X$ there are at most two different faces.
\end{itemize}

As $G$ is 5-connected, we can immediately conclude that in case of $Z$ being a 3- or 4-cycle, at least one of the sides must be an empty cycle.
 As $X$ contains
at least one face that is not a cut-face, Remark~\ref{rem:emptyfaces} (ii) implies that this must be the part with faces of $X^c$. The
observation then implies that $Z$ is no 3-cycle and in case of a 4-cycle has exactly one edge on the side of $X^c$. This configuration is depicted in
Figure~\ref{fig:4cyc}. The direction of the edges shows the way they are traversed in a facial walk with the face on the left. As we are on an orientable surface,
in a facial walk one always goes from an edge
$e_{i,R}$ to an edge $e_{i',L}$. Note that incoming and outgoing edges at a vertex can not correspond to the same undirected edge.

If all outgoing edges are different from incoming edges, or if the one possible outgoing edge identical to an incoming edge is present, but the facial walks of faces in $C$ contain additional edges
not incident with vertices of $Z$, the sum of the lengths of the faces in $C$
is at least $12$ -- contradicting the assumption. 
These arguments imply that -- up to symmetry -- $e_{1,R}$ must be the same as $e_{2,L}$. If $e_{2,R}$ would go to $e_{3,L}$ or $e_{4,L}$ in the facial walk,
$f$ or $f'$ would have a double edge in the dual contradicting the fact that $G^*$ is simple. So it must go to $e_{1,L}$, but -- as
$G$ is simple -- this is only possible with an additional edge in between, which is again a contradiction.

\begin{figure}
  \begin{center}
  \begin{minipage}{5cm}
\begin{center}
\includegraphics[width=40mm]{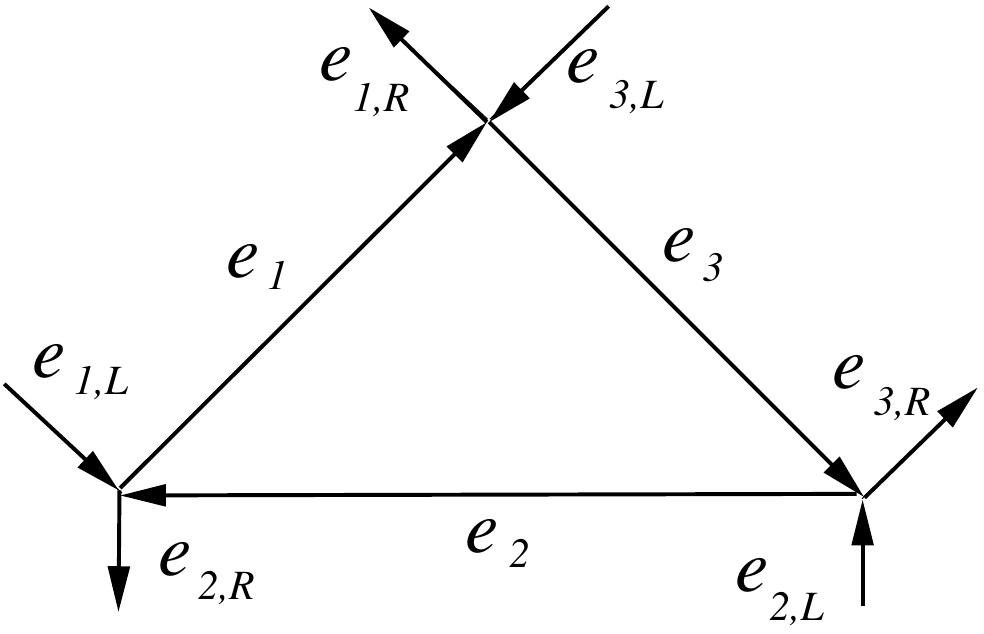}
\end{center}
  \end{minipage}
  \begin{minipage}{5cm}
\begin{center}
\includegraphics[width=40mm]{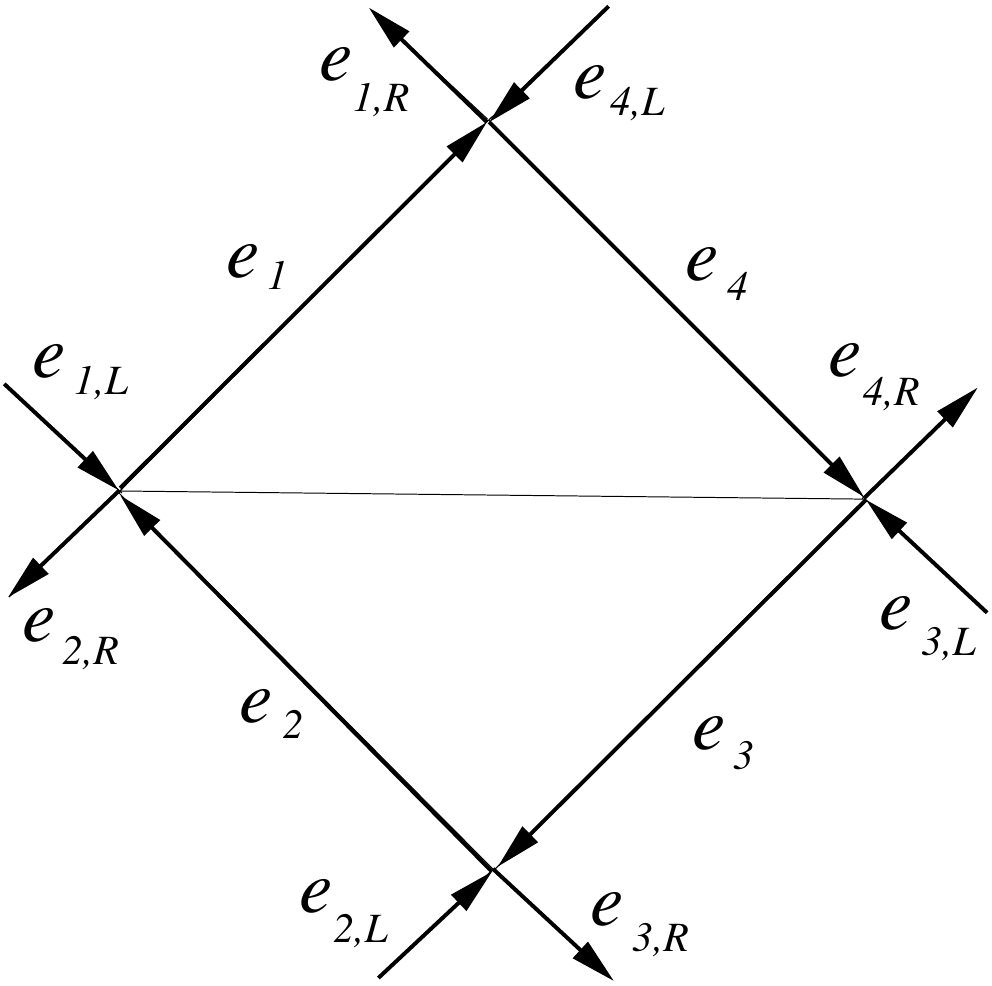}
\end{center}
  \end{minipage}
  \end{center}
  \caption{The configurations in case of a dual-separating 3- or 4-cycle. The configuration for the $4$-cycle is for no vertex on the side not containing the cut-faces. Internal edges are drawn on the outside for better visibility.}\label{fig:4cyc}
\end{figure}

The configuration for a dual-separating 5-cycle is depicted in Figure~\ref{fig:5cyc}(a). We first assume that all five vertices of $Z$ are incident to an interior edge.
Figure~\ref{fig:5cyc}(b) shows the possibilities how boundary vertices can be directly connected without creating double edges. E.g.:\ a directed edge
from $e_1$ to $e_2$ denotes that $e_{1,R}$ can be the same edge as $e_{2,L}$, so that the boundary walk can be $\dots , e_1,e_{1,R}=e_{2,L},e_2,\dots$. Even if
all edges of the cut-faces not on $Z$ would connect vertices of $Z$, the sum of the lengths would already be 10. To this end we know that there are two
cut-faces, as one cut-face together with an arbitrary neighbouring face would already have length at least 13. Furthermore this implies that there is at most one
occurrence of a vertex not on $Z$ in the cyclic walks of the cut-faces. The edges of the facial walks with both endpoints on $Z$ induce two directed cycles or
one directed cycle and one directed path in the directed graph in Figure~\ref{fig:5cyc}(b) containing all five oriented edges that the cut-faces share with $Z$ as vertices.
A directed edge between these vertices indicates that they can follow each other (with one intermediate edge) in the facial walk of the cut-faces.
As there is no directed 2-cycle, there
must be one cycle and one path and we can conclude that the cut-faces contain a vertex not on $Z$. Edges drawn parallel in Figure~\ref{fig:5cyc} (b) can not be
contained in the same cycle or path, as they would correspond to edges with the same endpoints -- this is illustrated for $v_1$ and $v_3$. As there are no
double edges, they would correspond to the same edge -- traversed in opposite direction -- and therefore to a loop in the dual graph. As all directed $4$-cycles contain such edges,
we do not have a directed $4$-cycle.

\begin{figure}
\begin{center}
\includegraphics[width=130mm]{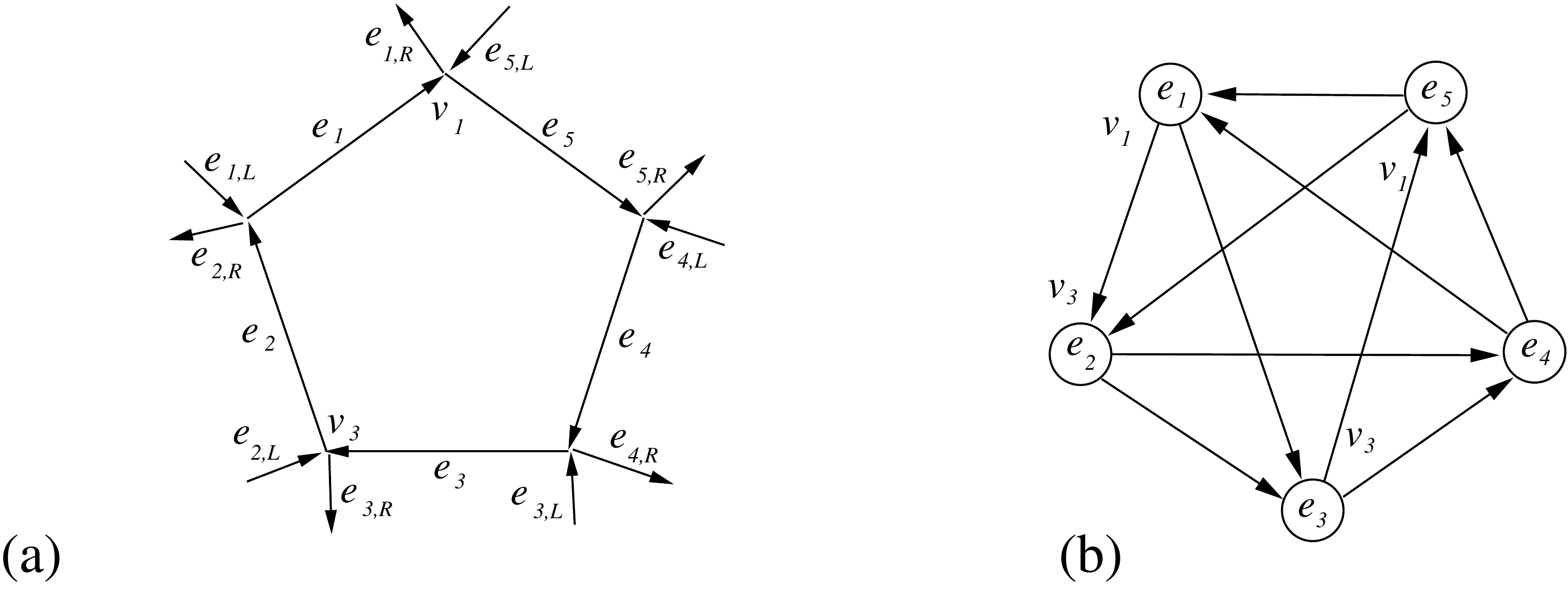}
\end{center}
\caption{The configuration in case of a dual-separating 5-cycle. The right hand side shows how boundary vertices can be directly connected without creating
  double edges. E.g.:\ a directed edge from $e_1$ to $e_2$ denotes that $e_{1,R}$ can be the same edge as $e_{2,L}$, so that the boundary walk can be
  $\dots ,  e_1,e_{1,R}=e_{2,L},e_2,\dots$ }\label{fig:5cyc}
\end{figure}

These arguments imply that we have one directed cycle with three vertices (corresponding to a face of size $6$) and a directed path with two vertices (corresponding
to a face of size $5$ containing one vertex not on $Z$). Up to symmetry we can assume that these are $e_1\to e_2 \to e_4\to e_1$ and $e_3\to e_5$. This means
that the edge $\{v_1,v_3\}$ has a cut-face on both sides and that $e_{1,R}=e_{5,L}$, so that $v_1$ and $v_3$ have only one -- and in fact common -- edge to the side containing the cut-faces.
Removing $v_2,v_4$, and $v_5$, the vertices $v_1$ and $v_3$ are separated from the internal vertices -- e.g.\ from the vertex of the cut-face that is not on $Z$ -- contradicting the
$5$-connectivity of $G$.

\medskip

So assume now that there is a vertex $v$ of $Z$ without an edge of a cut-face that is not on $Z$. As $G$ is 5-connected, the other four vertices form no cutset,
so at least one of the parts separated by $C$ must not contain a vertex. As $v$ has degree at least $5$, it must have a neighbour not on $Z$, so the empty part
must be $X$, but due to Remark~\ref{rem:emptyfaces} (ii) $X$ can not be empty.

\medskip

The last remaining case is that there are two cut-faces $f,f'$ and $d(f)+d(f') > 11$. In this case we have $d(f)+d(f') \le 16$ and that $f$ and $f'$ are not doubly intersecting.  Then, by
Remark~\ref{rem:cutsize}, there exists a set $X^*$ containing $C^*$, so that $E(X^*,(X^*)^c)$ is a $k$-edge cut with $k \le 8$ in $G^*$, in which all the edges
are incident to a vertex of $C^*$. By Lemma~\ref{lem:cut_dual} and as $G$ is simple, these edges correspond to one dual separating cyclic walk or to two cyclic walks $Z,Z'$ in $G$
that are dual separating.  With the same arguments as for $c=3$, we get that there are two cyclic walks and that $Z$ is neighbouring $f$ and $Z'$ is neighbouring $f'$ on one
side.  W.l.o.g.\ we either have a 3-cycle $Z$ and a 3-, 4- or 5-cycle $Z'$, or two 4-cycles.

We will first show that the faces of $X^*$ also contain vertices not on $Z$ or $Z'$. Assume to the contrary that there is no such vertex.  The cycle $Z$ can not contain all
edges of $f$, as $f$ must have a neighbour in $X^*$ (as a vertex of $G^*$).  So there is an edge of $f$ that is not on $Z$, but is incident to a vertex $v$
on $Z$. This can not be the only such edge adjacent to $v$, as otherwise there would be a loop in $G^*$. If $Z$ is a 3-cycle, both such edges must have endpoints on
$Z'$, so we have two different vertices on $f$ as well as on $f'$ and therefore $d(f)+d(f') \le 11$ -- in contradiction to the assumptions in this case.  If $Z$
is a 4-cycle, at most one of the edges can also have the other endpoint on $Z$, so we have at least one endpoint $w$ that is on $f$ and not on $Z$ (so on $Z'$
and also on $f'$).  As $Z'$ is also a 4-cycle, we also get a vertex $w'$ that is on $f'$ and not on $Z'$ (so also on $Z$ and $f$). As these vertices are
different, $f$ and $f'$ are doubly intersecting and again we have a contradiction. So the faces of $X^*$ also contain vertices not on $Z$ or $Z'$.

If both -- $Z$ as well as $Z'$ -- contained at most two vertices with neighbours in $X^*$ and not on $Z$ or $Z'$, we had a 4-cut separating the remaining vertices --
that is: vertices in faces in $(X^*)^c$ and possibly also on $Z,Z'$ -- from the vertices in $X^*$ that are not on $Z,Z'$.  So at least one of the cycles -- call it $Z''$ (adjacent to
face $f''$) -- has three vertices with edges not on $Z''$ but in a cut-face. At these vertices $f''$ has at least two edges not on $Z''$. With $l(Z'')$ the
length of $Z''$ and $d(Z'')$ the maximum number of edges that can connect vertices in a set of three fixed vertices of $Z''$, a lower bound for the size of $f$ is
$l(Z'')+6-d(Z'')$.  We have $l(Z'')\in \{3,4,5\}$ and $d(Z'')=l(Z'')-3$ in each of these cases, so we get 9 as a lower bound for $d(f'')$ and together with an
arbitrary face $f'''$ sharing an edge with $f''$ we get the contradiction $d(f'')+d(f''') > 11$.

\end{proof}

As a corollary we have

\begin{cor}
  All $3$-connected maps with maximum face size $4$ (e.g.\ quadrangulations) as well as all $5$-connected maps with maximum face size $5$ (e.g.\ pentagulations)
  with a simple dual have a 3-connected simple dual.
\end{cor}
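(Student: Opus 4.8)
The plan is to obtain this as an immediate specialization of Theorem~\ref{thm:mainthm}(ii). Recall that part (ii) guarantees a $3$-connected dual for any $c$-connected map $G$ with simple dual as soon as every pair of \emph{doubly intersecting} faces $f,f'$ satisfies $d(f)+d(f') < min^2_f(c)$. The one structural remark I would make up front is that this hypothesis is monotone in the face sizes: if a uniform bound holds for \emph{all} pairs of faces, then in particular it holds for the doubly intersecting ones. Hence it suffices to bound $d(f)+d(f')$ over arbitrary pairs of faces, which is trivial once a maximum face size is prescribed. So the whole proof reduces to matching the connectivity parameter $c$ with the face-size regime and checking one numerical inequality in each case.

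For the first assertion I would set $c=3$, where $min^2_f(3)=10$. If the maximum face size is $4$, then any two faces satisfy $d(f)+d(f') \le 4+4 = 8 < 10 = min^2_f(3)$, so the hypothesis of Theorem~\ref{thm:mainthm}(ii) is met and $G^*$ is $3$-connected. Quadrangulations are simply the special case in which every face has size exactly $4$. For the second assertion I would set $c=5$, where $min^2_f(5)=12$. If the maximum face size is $5$, then any two faces satisfy $d(f)+d(f') \le 5+5 = 10 < 12 = min^2_f(5)$, so again the hypothesis of (ii) holds and $G^*$ is $3$-connected; pentagulations are the special case with all faces of size $5$.

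I expect no genuine obstacle here, as the statement is a clean corollary of the main theorem: the only things to verify are the two strict inequalities $2\cdot 4 < 10$ and $2\cdot 5 < 12$, together with the correct pairing of $c$ with the allowed maximum face size. The single point worth flagging explicitly is the monotonicity observation above, which is what lets us discharge the ``doubly intersecting'' qualifier at no cost by bounding all pairs of faces uniformly.
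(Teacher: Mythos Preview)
Your proposal is correct and matches the paper's approach: the corollary is stated without proof immediately after Theorem~\ref{thm:mainthm}, so the intended argument is precisely the direct specialization you give, checking $2\cdot 4<10=min^2_f(3)$ and $2\cdot 5<12=min^2_f(5)$.
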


Of course Theorem~\ref{thm:mainthm} can also guarantee $3$-connectedness in case of larger faces if they are properly distributed among smaller faces.
  
Theorem~\ref{thm:mainthm} can also be used to answer a question from \cite{BBZ}: 

\begin{cor}\label{cor:delta2}
  For all $c>2$: $\delta_2(c)=g_{min}(c)+1$.
\end{cor}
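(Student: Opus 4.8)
The plan is to prove the two inequalities $\delta_2(c)\ge g_{min}(c)+1$ and $\delta_2(c)\le g_{min}(c)+1$ separately, with the lower bound being the genuinely new ingredient and the upper bound resting on known constructions. For the lower bound I would take an arbitrary $c$-connected map $G$ with simple dual $G^*$ carrying a $2$-cut and show that its genus is at least $g_{min}(c)+1$. Since a $2$-cut means that $G^*$ is not $3$-connected, the contrapositive of Theorem~\ref{thm:mainthm}(ii) immediately supplies two doubly intersecting faces $f,f'$ with $d(f)+d(f')\ge min^2_f(c)$.

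I would then split on $c$. For $c\ge 6$ we have $min^2_f(c)=12$, so the two guaranteed faces force $f^+(G)\ge (d(f)-3)+(d(f')-3)=d(f)+d(f')-6\ge 6$. Since $c$-connectivity gives minimum degree at least $c\ge 6$, I can apply Lemma~\ref{lem:vf_excess} with $v_x=0$ and $f_x=6$ to obtain $g\ge g_{min}(c)+\lfloor\frac{(c-6)\cdot 0}{12}+\frac{6}{6}\rfloor=g_{min}(c)+1$, as wanted. For $c\in\{3,4,5\}$ we have $g_{min}(c)=0$, so it suffices to rule out a planar example: any such $G$ is $3$-connected, and the dual of a $3$-connected plane graph is $3$-connected (in particular simple and cut-free), contradicting the assumed $2$-cut. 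Hence $\delta_2(c)\ge 1=g_{min}(c)+1$ in these cases as well.

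For the matching upper bound I would exhibit, for each $c>2$, a $c$-connected map of genus exactly $g_{min}(c)+1$ whose dual is simple and has a $2$-cut. For $c\ge 7$ this is precisely the bound $\delta_2(c)\le g_{min}(c)+1$ established in \cite{BBZ}. For $c\in\{3,4\}$ the map in Figure~\ref{fig:4conexcess4} (which is $4$-connected, hence also $3$-connected) already has genus $1=g_{min}(c)+1$ and the required dual. For $c=6$ the complete graph $K_7$, embedded so as to be triangular except for two hexagons sharing four vertices (an $H$-operation map as in the proof of Theorem~\ref{thm:mainthm}(i)), has $7$ vertices, $21$ edges and $12$ faces, so Euler's formula gives genus $2=g_{min}(6)+1$, and its dual is simple with a $2$-cut. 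The value $c=5$ requires a dedicated genus-$1$ example, since $K_6$ with two hexagons has no integral genus; here one checks directly the existence of a $5$-connected toroidal map whose only non-triangular faces are two doubly intersecting hexagons.

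Combining the two bounds yields $\delta_2(c)=g_{min}(c)+1$ for all $c>2$. The lower bound falls out cleanly once Theorem~\ref{thm:mainthm}(ii) produces two large doubly intersecting faces and Lemma~\ref{lem:vf_excess} converts the resulting face excess into a genus increment (the planar range $c\le 5$ being dispatched by the classical $3$-connected-dual fact). I expect the main obstacle to be the upper bound for the small values $c\in\{3,4,5,6\}$ that lie outside the scope of \cite{BBZ}: one must produce concrete maps of the prescribed genus and verify, for each, that it is $c$-connected and that its dual is simple with a $2$-cut. A subtle but helpful point is that here vertex excess is harmless --- for $c=6$ the coefficient $c-6$ in Lemma~\ref{lem:vf_excess} vanishes and for $c\le 5$ it is negative --- so, unlike for large $c$ where examples must essentially be vertex-minimal, these witnesses need not have vertex excess $0$, which is exactly what makes the genus-$1$ constructions for $c\le 5$ possible.
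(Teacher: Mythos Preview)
Your lower-bound argument is exactly the paper's: for $c\ge 6$ apply Theorem~\ref{thm:mainthm}(ii) contrapositively to obtain two doubly intersecting faces with $d(f)+d(f')\ge 12$, hence $f^+\ge 6$, and then Lemma~\ref{lem:vf_excess} with $v_x=0$, $f_x=6$ gives $g\ge g_{min}(c)+1$. Your treatment of $c\in\{3,4,5\}$ via the classical fact that a $3$-connected plane map has a $3$-connected dual is a clean alternative to what the paper does there.

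The difference --- and the source of your self-identified obstacle --- is the upper bound for small $c$. You say these values ``lie outside the scope of \cite{BBZ}'', but they do not: the paper's proof simply records that \cite{BBZ} already established the \emph{equality} $\delta_2(c)=g_{min}(c)+1$ for all $c\le 6$, and the inequality $\delta_2(c)\le g_{min}(c)+1$ for all $c>2$. So there is nothing to construct for $c\in\{3,4,5,6\}$; the only new content in the corollary is the lower bound for $c\ge 7$, which you have correctly supplied. Your attempt to rebuild the small-$c$ witnesses by hand is therefore unnecessary, and it leaves you with an actual gap at $c=5$ (``one checks directly the existence of a $5$-connected toroidal map\ldots'' is not a proof, and your observation that $K_6$ with two hexagons has non-integral Euler characteristic shows why a concrete construction is not immediate). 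Replace that paragraph with a citation to \cite{BBZ} and your argument is complete and essentially identical to the paper's.
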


\begin{proof}
  
  In \cite{BBZ} this equality has been shown for $c\le 6$. For all $c>2$ it has been shown that $\delta_2(c)\le g_{min}(c)+1$. By Theorem~\ref{thm:mainthm}, 5-connected
  maps with a simple dual that are not 3-connected must have $f^+ \ge 6$ and by Lemma~\ref{lem:vf_excess} have genus at least $g_{min}(c)+1$ if $c\ge 6$.

  \end{proof}

\section{Guaranteeing 2-connectedness of the dual}

The following simple remark will be used in the proof of Theorem~\ref{thm:delta1}.

\begin{rem}\label{rem:interior}
  For each $c$ and $l\ge 3$, there exists a $c$-connected simple graph with one face $f$ of size $l$ that is a cycle and with all other faces triangles, so that $f$ has no chords
  and each face shares at most one edge with $f$.
\end{rem}

\begin{proof}
  W.l.o.g. let $c>1$.
Let $c'\ge l\cdot (c-1) $ be so that $(c'-2)(c'-3)\equiv 0 \mod 12$. Then
$K_{c'+1}$ has a triangular minimum genus embedding \cite{Ri} and removing one vertex we get a $(c'-1)$-connected, so also $c$-connected, map with one face of size $c'$ and the rest triangles.
The face $f$ of size $c'$ is a simple cycle with vertices $v_0,\dots, v_{c'-1}$. We now add vertices $w_0,\dots ,w_{l-1}$ in $f$ and for $0\le i\le l-2$
connect $w_i$ with $v_{i(c-1)},\dots ,v_{(i+1)(c-1)}$, and $w_{l-1}$ with the remaining vertices as well as $v_0$ and $v_{c'-1}$. As each vertex is connected to at least $c$ vertices
of the $c$-connected graph, the result is $c$-connected. We now add edges to form a cycle $w_0,\dots ,w_{l-1}$. It is easy to see what the rotational order around the new vertices must
be to satisfy the requirements of the remark.
\end{proof}

Similar to the case of duals with a 2-cut, we will also precisely determine which face sizes are necessary to allow $1$-cuts in the dual. In this case  we consider the size of one face and define

\[min^1_f(c)=\left\{ \begin{array}{cc}
  6 & \mbox{ if } c=1 \\
  9 & \mbox{ if } c\in\{2,3\} \\
  10 & \mbox{ if }  c\in\{4,5\} \\
 12 & \mbox{ if } c=6 \\
 14 & \mbox{ if } c=7 \\
  15 & \mbox{ if } c\ge 8 \\
\end{array}
 \right. .
 \]

 \begin{thm}\label{thm:delta1}
   ~
   \vspace*{-0.2cm}
   
   \begin{description}

\item[(i):] For each $c$ there are $c$-connected maps with a single face of size $min^1_f(c)$ and all other faces triangles, for which the dual is simple and has a $1$-cut.
\item[(ii):] If $G=(V,E)$ is a $c$-connected map with a simple dual $G^*$ and every face of $G$ has size smaller than $min^1_f(c)$, then $G^*$ is 2-connected.
  
     \end{description}
 \end{thm}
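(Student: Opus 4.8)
The proof naturally splits into the construction in part (i) and the lower bound in part (ii), the latter being the substantial half and running closely parallel to the proof of Theorem~\ref{thm:mainthm}(ii): the tools are again Remark~\ref{rem:cutsize} (to replace the dual cut by an edge cut whose edges all sit at the cut-face), Lemma~\ref{lem:cut_dual} (to turn that edge cut into a dual-separating cyclic walk in $G$), $c$-connectivity (to force one side of the walk to be ``empty''), and Remark~\ref{rem:emptyfaces} (to bound the resulting empty circuit/pair). The essential new feature compared with the $2$-cut theorem is that now there is a single cut-face $f$, so the empty side is controlled by the \emph{circuit} parts of Remark~\ref{rem:emptyfaces} as well as the pair parts, and the target values $12,14,15$ for $c\ge 6$ are exactly the minimal edge counts appearing in Remark~\ref{rem:emptyfaces}(v),(ix),(vii).

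For part (i) I would give one extremal family. The idea is to realise the tight value as the boundary of a maximally economical ``pocket'' that $f$ cuts off in the dual. I would start from a $c$-connected triangulated host produced by Remark~\ref{rem:interior} or, as in Theorem~\ref{thm:mainthm}(i), from a triangular minimum-genus embedding of a complete graph, and replace a small patch by one of the extremal empty circuits or empty pairs whose existence is recorded in Remark~\ref{rem:emptyfaces}. The few boundary vertices of the patch are kept on the host, so they inherit degree $\ge c$ and $c$-connectivity is preserved, while every face of the inserted pocket can reach the host only through $f$, producing the $1$-cut in $G^*$ and a single face $f$ of the prescribed size. Small cases ($c\le 5$) are fastest drawn explicitly in the spirit of Figures~\ref{fig:c2minf_7} and~\ref{fig:4conexcess4}; the regime $c\ge 6$ uses the empty $6$- and $7$-circuits and pairs, and the case $c\ge 8$ uses the extremal empty $7$-circuit of Remark~\ref{rem:emptyfaces}(vii).

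For part (ii), assume $G$ is $c$-connected with simple dual $G^*$, all faces of size $<min^1_f(c)$, yet $G^*$ has a $1$-cut at a vertex $f$ (a face of $G$); I derive a contradiction. By the second item of Remark~\ref{rem:cutsize} there is a set $X^*\ni f$ inducing an edge cut of size $k\le\lfloor d(f)/2\rfloor$ in $G^*$, all of whose edges are incident with $f$, and by Lemma~\ref{lem:cut_dual} the corresponding $K\subset E$ is a dual-separating union of one or two cyclic walks $Z$ of total length $k$, bounded by $f$ on one side. Now either $V(K)$ is a genuine cut-set of $G$ -- forcing $|V(K)|\ge c$ and hence $k\ge c$ and $d(f)\ge 2k\ge 2c$ -- or, by Lemma~\ref{lem:cut_dual}(ii), one side of $Z$ has all its vertices on $V(K)$ and is therefore an empty $k$-circuit (one walk) or empty $k$-pair (two walks). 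In the latter case simplicity of $G^*$ forces $f$ to meet the pocket in $k$ pairwise distinct faces, i.e.\ the spanning face(s) neighbour $k$ different faces, so Remark~\ref{rem:emptyfaces}(i),(ii) exclude $k\in\{3,4,5\}$ and parts (iii)--(x) pin down the admissible vertex and edge counts for $k\in\{6,7\}$. The minimal edge counts $12,14,15$ then translate into the corresponding lower bounds on $d(f)$, and taking the smaller bound coming from the two alternatives in each connectivity range yields $d(f)\ge min^1_f(c)$; the case $c=1$, where a single pinch vertex is permissible, is handled directly and gives $6$.

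The hard part is the bookkeeping that converts Remark~\ref{rem:emptyfaces} into a bound on $d(f)$ and then separates the regimes cleanly. Because the non-empty side is highly connected and may consist entirely of triangles, it contributes nothing to a vertex-cut argument, so the whole lower bound must be extracted from the empty pocket together with the way $f$ winds around it, exactly as the edge counts in Remark~\ref{rem:emptyfaces} were obtained. The most delicate point is the \emph{odd} value $15$ for $c\ge 8$: here the balanced configuration would give only $2k=14$, and ruling it out forces $k=7$ with the empty side an empty $7$-\emph{circuit} rather than a $7$-pair, together with a parity argument on the Euler characteristic of the local map of exactly the type already used in Remark~\ref{rem:emptyfaces}(viii) and~(x). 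Correctly identifying, for each $c$, which extremal structure governs the bound -- the empty $6$-pair for $c=6$ (value $12$), the empty $7$-pair for $c=7$ (value $14$), and the empty $7$-circuit for $c\ge 8$ (value $15$) -- and verifying that neither alternative can produce a smaller face is where essentially all of the difficulty lies.
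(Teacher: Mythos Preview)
Your overall plan for (ii) --- dichotomise into ``$V(K)$ is a genuine cut-set'' versus ``one side of $Z$ is empty'' and then invoke Remark~\ref{rem:emptyfaces} --- is indeed the backbone of the paper's proof. But two concrete gaps keep the sketch from going through.

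First, for $c\in\{2,3,4\}$ your dichotomy does not finish the argument. With $d(f)<min^1_f(c)\le 10$ you have $k\le 4$, so by Remark~\ref{rem:emptyfaces}(i),(ii) neither side can be empty and you are forced into the cut-set branch; but your bound there is only $d(f)\ge 2k$ with $k\ge c$, giving $d(f)\ge 2c\le 8$, which does not contradict $d(f)<9$ or $d(f)<10$. The paper closes this by a direct count: at least $\min\{k,c\}$ (and always at least $3$) vertices of $Z$ carry interior edges of $f$, each contributing an outgoing and an incoming directed edge in the facial walk of $f$, with a bounded number able to short-cut back to $Z$; this yields $d(f)\ge 9$ resp.\ $10$. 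Your sketch has no counterpart to this step.

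Second, the claim that ``the minimal edge counts $12,14,15$ translate into the corresponding lower bounds on $d(f)$'' is not a valid inference, and your identification of the governing structures is off. The edge count $e$ of the empty side $X^c$ controls how many edges of $K_k$ are \emph{absent} from $X^c$ and hence available as short-cuts in $X$; the resulting bound is $d(f)\ge 3k-\binom{k}{2}+e$, which equals $e$ only for $k=7$. In particular: for $c=6$ one has $k\le 5$, so no empty structure is ever used and $min^1_f(6)=12=2c$ is purely the cut-set bound; for $c=7$ one has $k\le 6$, so the $7$-pair never arises and again $min^1_f(7)=14=2c$. For $c\ge 8$ the empty $7$-pair \emph{does} arise and the naive count gives only $d(f)\ge 14$; the paper does not ``rule it out by parity'' but pushes to $15$ by a pigeonhole on degrees (with exactly $14$ edges, one vertex of the $3$-gon already has five neighbours inside $X^c$, so at least one interior edge of $f$ must leave $V(K)$).

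For part (i), Remark~\ref{rem:emptyfaces} records non-existence and edge lower bounds, not actual maps, so ``replace a patch by one of the extremal empty circuits'' is not yet a construction; in particular an empty $7$-circuit cannot be triangulated away from a single large face. The paper builds explicit gadgets --- a cyclic map $Z_c$ for $c\in\{3,5,6,7\}$ and an explicit empty $9$-cycle of genus $3$ carrying a $15$-gon for $c\ge 8$ --- each glued into a $c$-connected host supplied by Remark~\ref{rem:interior}.
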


 \begin{proof}

   For part (i) we will give explicit constructions of such maps. For $c=1$, two plane copies of $K_4$ identified at a vertex is an example.
   For $c \in  \{2,4\}$ the result will follow from the cases $c=3$, resp.\ $c=5$.
   For $c \in  \{3,5,6,7\}$
   we will use the construction visualized for $c=5$ in Figure~\ref{fig:cyclicconstr}: Let $Z_c$ be the map (possibly first with multi-edges) with vertex
   set $\{0,\dots, c-1\}$ where the rotational order around vertex $i$ is (modulo $c$) $i-2,i-1,i+1,i+2$. For $c=3$ this construction produces three pairs of
   double edges. We subdivide the edges $\{i,i+2\}$. In each case we get one face (we call {\em central}) of size $9$ if $c=3$ and size $2c$ otherwise
   containing all edges.  Except for the central face, there is one hexagon and one triangle for $c=3$, two faces of size $c$ if $c\in \{5,7\}$ and one hexagon
   and two triangles for $c=6$. All faces different from the central face share only edges with the central face. For odd $c$ we can now identify each
   non-central face of length $l$ with the corresponding $l$-face in a separate copy of the $c$-connected maps from Remark~\ref{rem:interior}. As each such
   non-central face contains all vertices of $Z_c$, the result is $c$-connected, but in the dual the vertex corresponding to the central face is a
   cut-vertex. Furthermore all faces except the central face are triangles and share at most one edge with the central face, so that the dual is simple. For
   $c=6$ we do the same for the hexagon, but for the two triangles (which together also contain all vertices) we identify them with two triangles at distance at least $2$
   of a sufficiently large
   $6$-connected map on the torus.

   \begin{figure}
     \begin{center}
       \includegraphics[width=35mm]{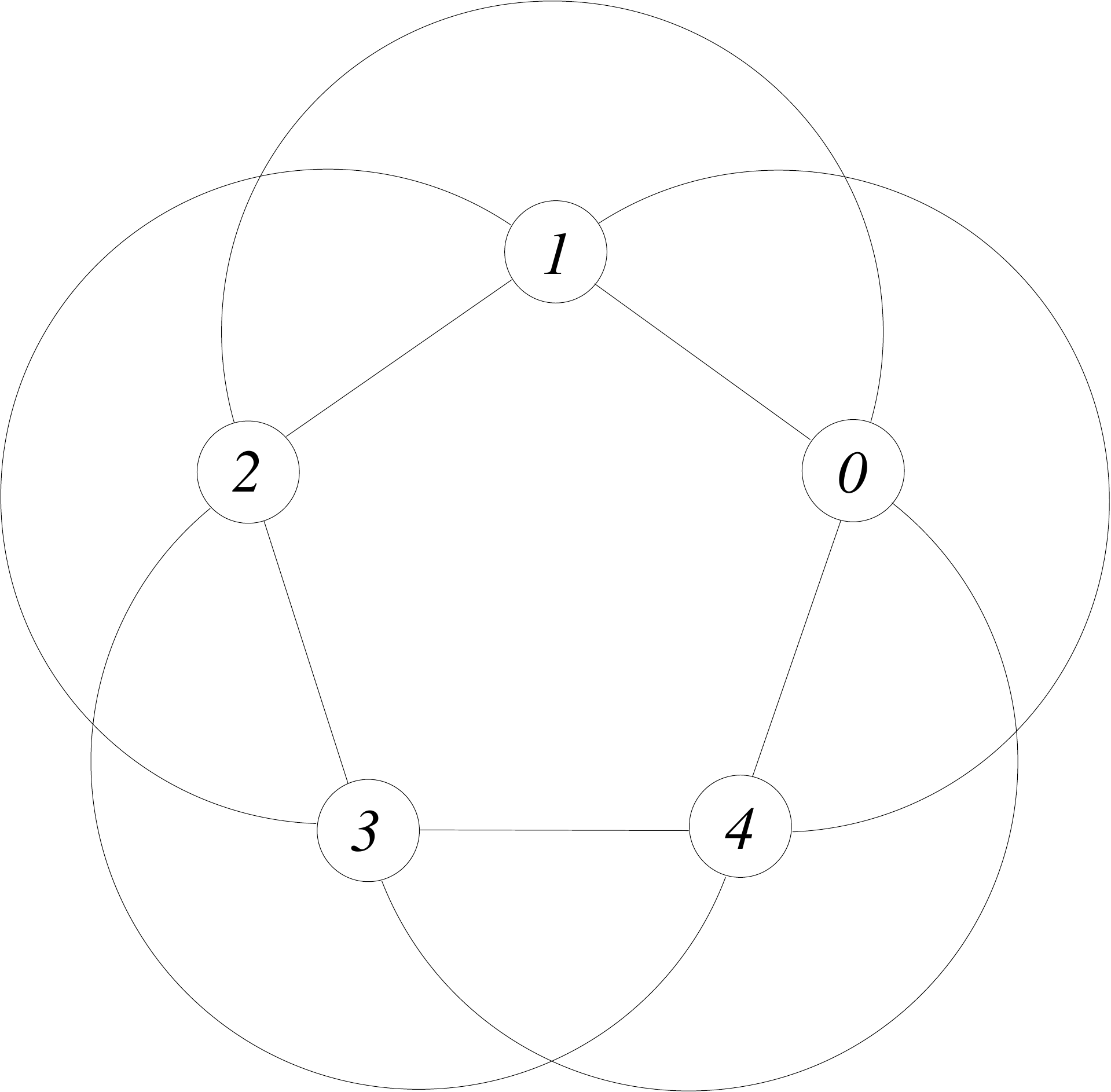}
     \end{center}
     \caption{The central part of a construction of maps with a cut-vertex in the dual, visualized for $c=5$.}\label{fig:cyclicconstr}
   \end{figure}
   
   For $c\ge 8$ we use the empty $9$-cycle in Figure~\ref{fig:15face}. It has triangles, one spanning $9$-gon $f_e$, and one $15$-gon $f$. It is a map of genus $3$.
   The face $f_e$ shares only edges with $f$. 
   The dual of this graph has double edges, but all of them are incident to the vertex corresponding to $f_e$, 
   so the map is in fact an empty $9$-cycle.
   Identifying this $9$-gon with a map from  Remark~\ref{rem:interior} for $c$ and $l=9$, we get a $c$-connected map with a simple dual where $f$ is a cut-vertex separating
   the triangles in the empty $9$-cycle from the other faces. By construction, all
   faces except $f$ -- that has size $15$ -- are triangles.

 \begin{figure}
\begin{center}
\includegraphics[width=90mm]{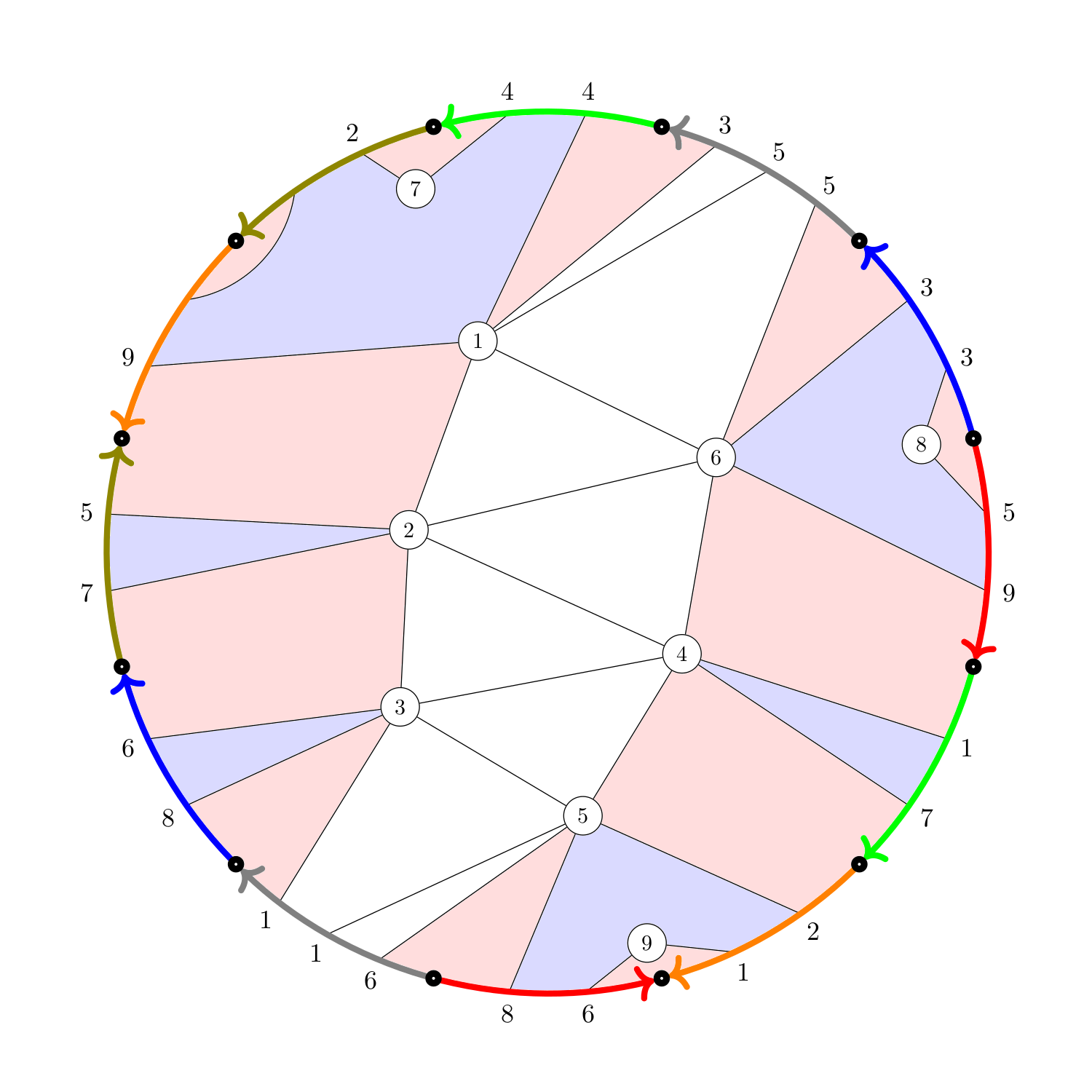}
\end{center}
\caption{An empty $9$-cycle with genus $3$. It has a $15$-gon (red) and a spanning $9$-gon (blue) sharing only edges with the $15$-gon. All other faces are triangles. In the (not simple) dual the $15$-gon is a cut-vertex.
  All double edges in the dual are between the $9$-gon and the $15$-gon.  }
\label{fig:15face}
\end{figure}

 \bigskip

 For part (ii) the case $c=1$ is trivial as no maps with a simple dual, a 1-cut, and all faces smaller than $6$ exist, as all facial walks in the components have length at least $3$. For $1$-connected maps that are also $2$-connected, it follows from the case $c=2$.
 
So let $c\ge 2$.
Assume that a $c$-connected map with a simple dual $G^*$ with a cut-vertex $f^*$ is given and that $f$ is the face corresponding to $f^*$ with
size $d< min^1_f(c)$.
Due to Remark~\ref{rem:cutsize} 
 there exists a set $X^*$ containing $f^*$, so that $E(X^*,(X^*)^c)$
  is a $k$-edge cut $K^*$ in $G^*$ with $k\le d/2\le 7$, in which all the edges are incident to $f^*$. Let $X$, resp. $X^c$ be the maps formed by all vertices and edges of
  faces of $G$ that are in $X^*$, resp. $(X^*)^c$. Due to Lemma~\ref{lem:cut_dual}, the corresponding dual-separating set $K$ can be decomposed into two cycles $Z_1,Z_2$ or one cyclic walk $Z$.
  The map $X$ has a face not containing an edge of $K$ and in $X^c$ each face shares at most one edge with $K$, as on the side of $X$ there is always the same face $f$.

  If $X$ is not an empty circuit or pair, it contains a vertex $v$ not contained in an edge of $K$. Then there are -- even if $X^c$ is empty -- at least
  $o_1=\min\{k,c\}$ vertices on $K$ adjacent to an {\em interior} edge as each vertex on $K$ has $c$ disjoint paths in $G$ to $v$. If there is no path starting
  with an interior edge, there are $c$ paths via vertices of $K$ and the last vertices of $K$ on these paths must be pairwise disjoint and there must be at
  least $c$ of them. Nevertheless it is also in case of $c=2$ not possible that there are just $2$ vertices with an interior edge (no matter whether $X$ is
  empty or not), as a vertex with an interior edge must have more than one interior edge, as the edge would otherwise correspond to a loop in the dual. So let $o=\max\{o_1,3\}$.

  Due to Remark~\ref{rem:emptyfaces} (i),(ii) for $c\in \{2,3,4,5,6\}$ (so $k\le 5$) neither $X$ nor $X^c$ are empty, which gives an immediate contradiction for $c\in \{5,6\}$, as in
  these cases we have one cycle $Z$, $l(Z)<c$, and each path connecting vertices in the two parts must pass a vertex on $K$.

  For $c=4$ we get (also using Remark~\ref{rem:emptyfaces}) $l(Z)=4$ and as no edge can have $f$ on both sides, at most two of the outgoing edges can be
  adjacent to other vertices of $Z$ and we get $d(f)\ge 4 +4 +(4-2)=10$ -- a contradiction.

For $c\in \{2,3\}$ we get in case of $l(Z)=3$ that  $d(f)\ge 3 + 6=9 $, as none of the outgoing edges can go directly to a vertex on $Z$ without creating a double edge. If $l(Z)=4$
we get $d(f)\ge 4 + 3 + (3-1) =9 $ in case of $3$ vertices with an interior edge (note that in this case only one interior edge can connect vertices of $Z$), resp.\ $d(f)\ge 4 + 4 +2 =10 $ in case
of $4$ vertices with an interior edge -- in each case a contradiction.

For $c\ge 7$ we get $k< c$, so one of $X$, $X^c$ must be empty and due to Remark~\ref{rem:emptyfaces} (i),(ii) we get $k=6$ in case $c=7$ and $k\in\{6,7\}$ in case
$c=8$. Due to Remark~\ref{rem:emptyfaces} (iii),(iv) in case of one cycle of length $6$ we get that $X^c$ must be empty and contain at least $13$ edges -- so only two edges of
the complete graph $K_6$ can be missing and be interior edges in $f$. Note that outgoing edges in the cyclic walk are pairwise different, as otherwise $G$ had a loop. We get $d(f)\ge 6 + 6 + (6-2) =16$ -- a contradiction. In case of two cycles,
Remark~\ref{rem:emptyfaces} (v),(vi) imply that $X^c$ is empty and has at least $12$ edges, so we get $d(f)\ge 6 + 6 + (6-3) =15$ -- also a contradiction.

The last remaining case is $c\ge 8$ and $k=7$. Again it follows immediately that one of $X,X^c$ must be empty. In case of one $7$-gon, Remark~\ref{rem:emptyfaces} (vii),(viii) 
imply that the empty part is $X^c$ and that it contains at least $15$ edges, so at most $6$ edges are missing from $K_7$. We get $d(f)\ge 7 + 7 + (7-6) =15$ -- a contradiction.

In case of an empty $7$-pair we have due to Remark~\ref{rem:emptyfaces} (ix),(x) that the empty part is again $X^c$ and that it contains $7$ vertices and
at least $14$ edges. In case of more than $14$ edges, we have an immediate contradiction as above. In case of exactly $14$ edges and $7$ vertices, there are $7$ edges between
the $3$-gon and the $4$-gon, so one vertex $v$ of the $3$-gon is already in $X^c$ adjacent to $5$ vertices and at least one of the incoming and outgoing edges of $f$ is not between vertices of
$K$. We have $d(f)\ge 7 + 7 + 1 =15$.

\end{proof}

While for $\delta_2(c)$ already in \cite{BBZ} it was proven that $\delta_2(c)$ is at most one larger than $g_{min}(c)$, the bound for $\delta_1(c)$ proven there
is a constant factor away from $g_{min}(c)$. Nevertheless it was not proven that $\delta_1(c)$ is strictly larger than $\delta_2(c)$ for sufficiently large $c$. This is now an easy corollary of
Theorem~\ref{thm:delta1}:

\begin{cor}\label{cor:delta1}
 For all $c>3$: $\delta_1(c)\ge g_{min}(c)+2$, so especially  $\delta_1(c)>  \delta_2(c)$.
\end{cor}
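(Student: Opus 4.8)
The plan is to turn the face-size threshold of Theorem~\ref{thm:delta1}(ii) into a genus bound via Lemma~\ref{lem:vf_excess}. Let $G$ be a $c$-connected map of genus $g$ whose simple dual $G^*$ has a $1$-cut. By the contrapositive of Theorem~\ref{thm:delta1}(ii), $G$ has a face $f$ with $d(f)\ge min^1_f(c)$, hence $f^+(G)\ge min^1_f(c)-3$. For $c\ge 6$ I apply Lemma~\ref{lem:vf_excess} with $v_x=0$ and $f_x=min^1_f(c)-3$, giving $g\ge\left\lceil \frac{(c-2)(c-3)}{12}+\frac{min^1_f(c)-3}{6}\right\rceil$, and it remains to verify this is at least $g_{min}(c)+2$ in each case. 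For $c\ge 8$ we have $f_x=12$, so the second summand equals $2$ and $g\ge\left\lceil\frac{(c-2)(c-3)}{12}\right\rceil+2=g_{min}(c)+2$. For $c=7$ we have $f_x=11$ and $\frac{20}{12}+\frac{11}{6}=\frac{7}{2}$, so $g\ge 4=g_{min}(7)+2$; for $c=6$ we have $f_x=9$ and $1+\frac{9}{6}=\frac{5}{2}$, so $g\ge 3=g_{min}(6)+2$. This disposes of all $c\ge 6$.

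The remaining cases $c\in\{4,5\}$ are the main obstacle, since there $g_{min}(c)=0$ and Lemma~\ref{lem:vf_excess} is unavailable (its proof needs $c\ge 6$ so that the vertex term stays nonnegative); moreover a naive Euler count is hopeless, because arbitrarily large $c$-connected maps of minimum genus exist and enlarging the vertex set only weakens such a bound. Here I would prove $g\ge 2$ directly. Genus $0$ is immediate: $G$ is $2$-connected, and the dual of a $2$-connected plane map is $2$-connected, contradicting the existence of the $1$-cut. To force $g\ge 2$ in general I would show that the abstract graph $G^*$ has genus at least $2$; as $G^*$ embeds on $\mathbb{S}_g$ this gives $g\ge \mathrm{genus}(G^*)\ge 2$. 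Since $f^*$ is a cut vertex it lies in at least two blocks of $G^*$, and by the Battle--Harary--Kodama--Youngs additivity of genus over blocks we have $\mathrm{genus}(G^*)=\sum_i \mathrm{genus}(B_i)$, so it suffices to show that at least two blocks of $G^*$ through $f^*$ are non-planar. I expect this to be the hard step: one must rerun the local cut analysis from the proof of Theorem~\ref{thm:delta1} on a hypothetically planar block to show that such a block forces either a multi-edge in $G^*$ or a vertex cut of size less than $c$ in $G$. The point is that a single large face (the source of the $1$-cut) carries strictly more excess than the two doubly intersecting faces that a $2$-cut needs, which is precisely why the torus that still realises $\delta_2(c)=g_{min}(c)+1$ cannot realise a $1$-cut for $c\in\{4,5\}$.

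Assembling the cases gives $\delta_1(c)\ge g_{min}(c)+2$ for every $c>3$. Since $c>3>2$, Corollary~\ref{cor:delta2} yields $\delta_2(c)=g_{min}(c)+1$, and therefore $\delta_1(c)\ge g_{min}(c)+2>g_{min}(c)+1=\delta_2(c)$, which proves the final assertion.
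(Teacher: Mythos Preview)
For $c\ge 6$ your argument is correct and essentially identical to the paper's: invoke the contrapositive of Theorem~\ref{thm:delta1}(ii) to get $f^+(G)\ge min^1_f(c)-3$ and feed this into Lemma~\ref{lem:vf_excess}. (The paper actually cites \cite{BBZ} for $c=6$ too, but your direct computation via Lemma~\ref{lem:vf_excess} for $c=6$ is valid and gives the same bound.)

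The gap is at $c\in\{4,5\}$. You correctly observe that Lemma~\ref{lem:vf_excess} is unavailable there, but then you only \emph{sketch} a route via Battle--Harary--Kodama--Youngs block additivity and explicitly flag the decisive step --- that at least two blocks of $G^*$ through $f^*$ are non-planar --- as something you have not carried out (``I expect this to be the hard step''). That is a plan, not a proof, and the missing step is not routine: it is not clear a priori that \emph{both} sides of the cut vertex must force non-planarity, and establishing this would require a local analysis on the order of the proof of Theorem~\ref{thm:delta1} itself. The paper bypasses all of this: the exact values $\delta_1(4)=\delta_1(5)=2$ and $\delta_1(6)=3$ were already determined in \cite{BBZ}, so for $c\in\{4,5,6\}$ the corollary is a one-line citation rather than a new argument. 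You should simply quote \cite{BBZ} for these small cases and reserve the Euler-formula computation for $c\ge 7$ (or $c\ge 6$, as you did).
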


\begin{proof}
  In \cite{BBZ} the values of $\delta_1(c)$ were determined for $c\le 6$. We have $\delta_1(1)=0=g_{min}(0)$, $\delta_1(c)=1=g_{min}(c)+1$ for $c\in\{2,3\}$.
  For $c\in\{4,5\}$ it is proven that $\delta_1(c)=2=g_{min}(c)+2$ and for $c=6$ we have $\delta_1(c)=3=g_{min}(c)+2$.

  For $c=7$, Theorem~\ref{thm:delta1} implies that for a $7$-connected map with a simple dual with a $1$-cut we have $f^+\ge 11$, so by Lemma~\ref{lem:vf_excess}, the genus is at least

  \[ \left\lceil \frac{(7-2)(7-3)}{12}+\frac{11}{6} \right\rceil = \left\lceil \frac{42}{12} \right\rceil = 4 = g_{min}(7)+2\].

  For $c\ge 8$  Theorem~\ref{thm:delta1} implies  $f^+\ge 12$, so with Lemma~\ref{lem:vf_excess} we have that the genus is at least $g_{min}(c)+2$.

  \end{proof}

Due to the term $\frac{(c-6)v_x}{12}$ in Lemma~\ref{lem:vf_excess} there can only be a general upper bound $g_{min}(c)+d$ for $\delta_1(c)$ 
with $d$ a constant, if -- except for a finite number of values of $c$ -- there are embeddings of $K_{c+1}$ of genus at most $g_{min}(c)+d$ and a simple dual with a
1-cut. So it is especially interesting to study embeddings of the complete graph. We will now give such a constant $d$ -- although unfortunately only for special values of $c$.
Let $K_n-E(K_6)$ denote the graph obtained from $K_n$ by deleting the edges of a complete subgraph on $6$ vertices and let $c(s)=12s+8$. We will use the following theorem,
which has been proven by Guy and Ringel \cite{GR} for $s\ge 4$ and was later improved by Sun \cite[Subsection 6.4]{Sun2020} to $s\ge 2$.

\begin{thm}[\cite{GR, Sun2020}]
\label{triangular}
For $s\ge 2$, $K_{c(s)+1}-E(K_6)$ has a triangular embedding of genus $g_{min}(c(s))-3$.
\end{thm}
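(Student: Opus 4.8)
The statement is a known result, so the plan is to recover it through the rotation-scheme (current-graph) machinery of Ringel and Youngs \cite{Ri,GT,MT}. First I would verify that \emph{any} triangular embedding of this graph has the stated genus, so that only existence is in question. Writing $n=c(s)+1=12s+9$, the graph $K_n-E(K_6)$ has $|V|=12s+9$ vertices and $|E|=\binom{n}{2}-15=72s^2+102s+21$ edges. A triangulation forces $3|F|=2|E|$, hence $|F|=48s^2+68s+14$, and substituting into Euler's formula $|V|-|E|+|F|=2-2g$ yields $g=12s^2+11s$. Since $g_{min}(c(s))=\lceil (12s+6)(12s+5)/12\rceil=12s^2+11s+3$, this equals $g_{min}(c(s))-3$ exactly. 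Thus the genus is automatic and the entire content of the theorem is the \emph{existence} of a triangulation.

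Second, I would pass to the combinatorial reformulation: an orientable triangular embedding of a graph is the same as a rotation system all of whose traced faces are $3$-gons. To produce such a rotation system for a graph that is complete apart from a missing $K_6$, the natural tool is an index-$1$ current graph $\Gamma$ carrying currents in a cyclic group $\mathbb{Z}_N$, with $N$ chosen according to the residue $n\equiv 9 \pmod{12}$. The requirements are the standard ones: Kirchhoff's current law holds at every trivalent vertex, each nonzero group element occurs exactly once among the currents \emph{except} for the differences that must be suppressed to delete the $15$ edges of the $K_6$, and $\Gamma$ has a single face (index $1$), so that its derived rotation is a single circuit generating a triangulation of the derived graph. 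The six vertices of reduced degree are created by designating a small set of vortices (vertices of valence $\neq 3$ where KCL is relaxed) whose local structure removes precisely $E(K_6)$. An appealing alternative, which I would keep in reserve, exploits that $n-6=12s+3\equiv 3 \pmod{12}$: start from a triangular embedding of $K_{12s+3}$ (which exists for this residue) and augment by six pairwise non-adjacent vertices, each joined to all $12s+3$ old vertices, via a diamond-sum surgery that preserves triangularity; the result is exactly $K_{12s+9}-E(K_6)$.

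Third, since no single current graph works uniformly, the argument must be organized in two layers, and this is where the range $s\ge 2$ is decided. I would supply explicit current graphs (equivalently, explicit base triangulations) for the smallest values of $s$, and then extend to all $s\ge 2$ by a recursive surgery — a diamond sum that splices a fixed gadget triangulation onto a smaller triangular embedding at a vertex of matching degree, raising $s$ in controlled steps while keeping every face a triangle and keeping exactly the edges of one $K_6$ absent. The passage from $s\ge 4$ (Guy and Ringel) to $s\ge 2$ (Sun) is precisely the result of replacing the cruder base constructions by sharper explicit ones for the two smallest cases, so that the recursion can start earlier.

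The hard part will be the explicit construction and verification of the current graphs (or base triangulations): arranging the current assignment so that KCL and the ``each nonzero element once, minus the $K_6$ differences'' condition hold \emph{simultaneously} with the index-$1$ (single-circuit) condition, while the vortices delete exactly $E(K_6)$ and nothing more. Confirming the single-circuit condition and that every derived face is a triangle is the delicate, case-sensitive step; by comparison the Euler-characteristic count and the reduction to rotation systems are routine.
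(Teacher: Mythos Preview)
The paper does not prove this theorem: it is stated with citation brackets and introduced by ``the following theorem, which has been proven by Guy and Ringel \cite{GR} for $s\ge 4$ and was later improved by Sun \cite{Sun2020} to $s\ge 2$.'' There is no proof in the paper to compare against; the result is simply invoked.

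That said, your outline is a fair sketch of what those cited papers actually do. Your Euler-formula computation in the first paragraph is correct and shows that the genus value is forced once a triangulation exists. Your identification of index-$1$ current graphs over a cyclic group, with vortices arranged to suppress exactly the $15$ differences realising $E(K_6)$, is indeed the mechanism in \cite{GR}; Sun's contribution in \cite{Sun2020} is to supply the missing small cases so the construction covers $s\ge 2$. Your ``alternative in reserve'' (starting from a triangulation of $K_{12s+3}$ and adjoining six mutually non-adjacent vertices via diamond sums) is plausible in spirit but would need care: a single diamond sum adds one vertex joined to a prescribed neighbourhood, and doing this six times while keeping the six new vertices pairwise non-adjacent and all faces triangular is not automatic --- you would have to exhibit, at each step, a vertex of the right degree in the auxiliary triangulation and check that no edge between new vertices is created. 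This is not obviously simpler than the direct current-graph route.

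If your goal is only to use the theorem in this paper, a citation suffices. If your goal is to reprove it, the outline is the right shape, but the ``hard part'' you identify --- producing and verifying the explicit current assignments satisfying KCL, the index-$1$ condition, and the exact pattern of suppressed differences --- is essentially the whole proof, and nothing short of writing down those current graphs (or reproducing Sun's tables) will complete it.
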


By adding edges to such an embedding of $K_{c(s)+1}-E(K_6)$ we will prove:

\begin{thm}
\label{thm:delta1limit}
For $s\ge 2$ we have $\delta_1(c(s))\le g_{min}(c(s))+3$.
\end{thm}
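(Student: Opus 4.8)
The plan is to take the very efficient embedding from Theorem~\ref{triangular} as a skeleton and to reach the desired map by adding back exactly the $15$ deleted edges. Write $M$ for the triangular embedding of $K_{c(s)+1}-E(K_6)$ of genus $g_{min}(c(s))-3$, and let $v_1,\dots,v_6$ be the six vertices whose mutual edges are missing. I would first pin down what the final object must be: a $c(s)$-connected graph has minimum degree at least $c(s)$, and any vertex beyond the $c(s)+1$ already present would, by Lemma~\ref{lem:vf_excess}, cost on the order of $s$ handles (far more than we can afford), so the map we build must have exactly $c(s)+1$ vertices; but a graph on $c(s)+1$ vertices with minimum degree $c(s)$ is complete. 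Hence the target is an embedding of $K_{c(s)+1}$ itself, obtained from $M$ by inserting the $15$ edges of the missing $K_6$ (which at once raises each $v_i$ from degree $c(s)-5$ to $c(s)$ and makes the graph complete, so that $c(s)$-connectivity becomes automatic).

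Next I would fix the genus budget by Euler's formula. The minimum genus of $K_{c(s)+1}$ equals $g_{min}(c(s))$, and since $c(s)+1\equiv 9\pmod{12}$, $K_{c(s)+1}$ admits no triangulation: a minimum-genus embedding already carries face excess $3$, and each additional handle raises the excess by $6$. A $1$-cut in the dual forces, by Theorem~\ref{thm:delta1}, a face of size at least $15$, i.e.\ excess at least $12$; there is ample room for this at genus $g_{min}(c(s))+3$, where the excess is $21$ and leaves space for one $15$-gon together with the few extra non-triangles that $K_{c(s)+1}$ is forced to have. So the aim is to insert the $15$ new edges using exactly six additional handles and to arrange that one face $f$ is a $15$-gon whose boundary walk is a self-touching closed walk separating the surface into two lobes, exactly as for the genus-$3$ empty $9$-cycle of Figure~\ref{fig:15face}. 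By Lemma~\ref{lem:cut_dual} such an $f$ is a cut-vertex of the dual, and one checks that (just as for the empty $9$-cycle, whose only dual multi-edges disappear once its spanning face is filled in) no two faces share two edges, so the dual stays simple.

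The hard part, and the real content, is realizing this insertion concretely: the six vertices $v_1,\dots,v_6$ are pairwise non-adjacent in the triangulation $M$ and therefore lie in no common face, so the $K_6$ cannot be drawn by merely subdividing existing faces and genuinely consumes handles; I must route all $15$ edges over only six handles while simultaneously building the separating $15$-gon $f$ and respecting dual-simplicity (no neighbouring face may share two edges with $f$, and no added edge may duplicate an existing one). My approach would be to work in the spirit of the part~(i) constructions and of Figure~\ref{fig:15face}: spend three handles to attach a triangulated lobe bounded on one side by the $15$-gon $f$, and use the remaining three -- the exact deficit by which $K_{c(s)+1}-E(K_6)$ falls short of the minimum genus of $K_{c(s)+1}$ -- to route the missing $K_6$ to the main block on the other side of $f$. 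Here, unlike in the vertex-inefficient construction of Theorem~\ref{thm:delta1}(i), $K_{c(s)+1}$ cannot be triangulated away from $f$, so the excess $21$ must appear as $f$ (excess $12$) together with non-triangular faces of total excess $9$; I would place these away from $f$ so that they disturb neither the cut nor the simplicity of the dual. Once such a routing is exhibited, verifying that the genus is $g_{min}(c(s))+3$, that the dual is simple, and that $f$ is a cut-vertex is routine Euler-characteristic and case bookkeeping, and together with Corollary~\ref{cor:delta1} this pins $\delta_1(c(s))$ to $\{g_{min}(c(s))+2,\,g_{min}(c(s))+3\}$.
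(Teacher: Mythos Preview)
Your proposal correctly identifies the starting point (Theorem~\ref{triangular}), the target graph ($K_{c(s)+1}$), and the handle budget (six), but it contains a genuine gap: you never actually build the embedding. You write ``Once such a routing is exhibited, verifying \dots\ is routine'', yet the routing is precisely what has to be exhibited, and you yourself call it ``the hard part, and the real content''. Your plan to ``spend three handles to attach a triangulated lobe bounded on one side by the $15$-gon $f$'' is also problematic under your own vertex constraint: such a lobe, having no new vertices, would live on (a subset of) $v_1,\dots,v_6$, and a $15$-gon on six vertices already uses every edge of $K_6$; a short Euler computation shows that $K_6$ has no orientable embedding with a single $15$-face and all remaining faces triangles, so ``triangulated lobe'' cannot mean what it says. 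At best you are left having to place the residual excess~$9$ somewhere while simultaneously keeping the dual simple, and none of this is carried out.

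The paper's construction sidesteps these difficulties by \emph{not} aiming for the minimal cut-face size. One picks six pairwise vertex-disjoint triangular faces $t_1,\dots,t_6$ of $M$ with $v_i\in t_i$ (possible since $c(s)+1\ge 33$) and threads the edges of a $6$-cycle $v_1v_2\cdots v_6v_1$ through them as in Figure~\ref{fig:addc6}; the six triangles merge into a single $24$-gon $f_{24}$ together with a hexagon $f_6$ that shares edges only with $f_{24}$ (six new edges, four fewer faces, so genus rises by~$5$). One then glues the torus embedding of $K_6$ (obtained from $K_7$ on the torus by deleting a vertex, an empty $6$-cycle with only triangular interior faces) into $f_6$, adding one more handle. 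The result is $K_{c(s)+1}$ on genus $g_{min}(c(s))+3$, the $24$-gon is a cut-vertex of the dual separating the $K_6$-triangles from the rest, and the entire face excess~$21$ sits in that single $24$-gon, so dual-simplicity is immediate. The key idea you are missing is this explicit $6$-cycle insertion creating the large face in one stroke.
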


\begin{proof}

  Let $s\ge 2$ and $G=K_{c(s)+1}-E(K_6)$ with a triangular embedding of genus $g_{min}(c(s))-3$. Let $v_1,\dots ,v_6$ be the six pairwise non-adjacent vertices.
  As all faces are triangles, the dual is simple.  As $s\ge 2$, we have $c(s)+1\ge 33$ and we can choose six vertex disjoint triangular faces $t_1,\dots ,t_6$
  with for $1\le i\le 6 $ $v_i\in t_i$. We insert the edges of a $6$-cycle into these triangles in the way displayed in Figure~\ref{fig:addc6}. Following the
  oriented edges we see that the six triangles are connected to form one $24$-gon $f_{24}$ and that on the other side of the six new edges there is a $6$-gon
  $f_6$ sharing only edges with the $24$-gon. So we have $6$ new edges and we lose $4$ faces, so that the Euler characteristic is decreased by $10$ and the
  genus is increased by $5$ to $g_{min}(c(s))+2$.

   \begin{figure}
     \begin{center}
       \includegraphics[width=45mm]{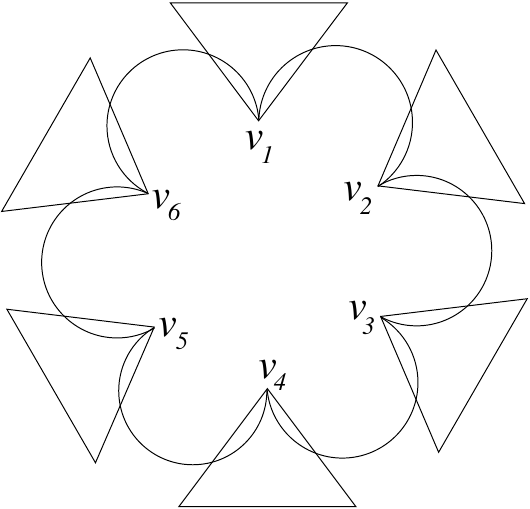}
     \end{center}
     \caption{The rotational order around the vertices when adding a $6$-cycle to an embedding of $K_{c(s)+1}-E(K_6)$. Six triangles are connected to form one $24$-gon. The face on the other side of the new edges is a $6$-gon. In the dual there are only double edges between the $6$-gon and the $24$-gon.}\label{fig:addc6}
   \end{figure}

   Removing one vertex of the embedding of $K_7$ on the torus, we get an embedding of $K_6$ on the torus, which is an empty $6$-circuit with -- except for the
   $6$-gon -- only triangles, so that all faces sharing an edge with the $6$-gon are pairwise different. Identifying this $6$-gon with $f_6$, the dual is simple with $f_{24}$ as a cut-vertex in the dual
   separating
   the triangles of $K_6$ from the other triangles. The genus is $g_{min}(c(s))+3$. Note that no double edges are created, as the graph is $K_{c(s)+1}$: exactly those edges
   have been inserted that were deleted to form $K_{c(s)+1}-E(K_6)$.
  
\end{proof}

For $c\le 6$ the value of $\delta_1(c)$ was already determined in \cite{BBZ}. Except for the first three values, all known values are $g_{min}(c)+2$.
As for large $c$ only complete graphs can possibly
realize the value $g_{min}(c)+2$, it should be noted that for $c<8$, complete graphs can not realize this bound:
computing the number $n_f$ of faces of a complete graph when embedded on genus $g_{min}(c)+2$, we get for $c<8$ that
$n_f\le min^1_f(c)$, so that by Theorem~\ref{thm:delta1} for $c<8$ complete graphs embedded on genus $g_{min}(c)+2$ with a sufficiently large face to allow a $1$-cut in the dual
have multi-edges or loops in the dual. In a final remark we will prove some new values for $\delta_1(c)$.

\begin{rem} ~

  $\delta_1(1)=0=g_{min}(1)$.

  For $2\le c \le 3$ we have $\delta_1(c)=1=g_{min}(c)+1$.

  For $4\le c \le 9$ we have $\delta_1(c)=g_{min}(c)+2$.

  For $10\le c $ we have $\delta_1(c)\ge g_{min}(c)+2$.
  
\end{rem}

\begin{proof}

  For $c\le 6$, the value of $\delta_1(c)$ was already determined in \cite{BBZ}. For $7\le c \le 9$, Corollary~\ref{cor:delta1} together with Figures~\ref{fig:k9_minmatch}, \ref{fig:k9_5}, and 
  \ref{fig:k10_6} prove the remaining statements.

  Note that for $c\ge 8$ the examples are complete graphs.

\end{proof}

\begin{figure}[tb]
  \begin{center}
  \begin{minipage}{7.5cm}
\begin{center}	
  \resizebox{0.95\textwidth}{0.95\textwidth}
        {
          \input{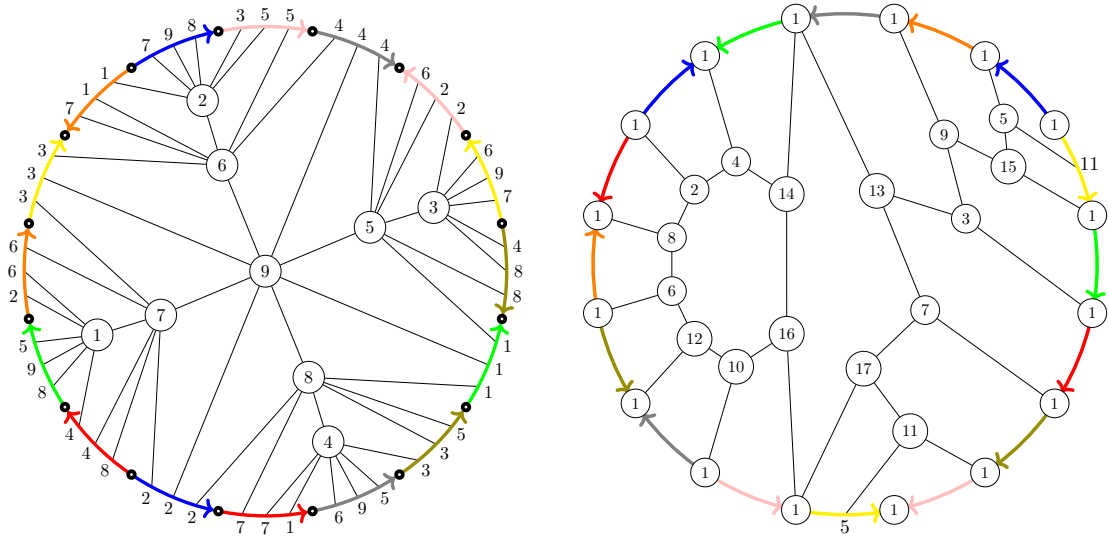}
        }
\end{center}
  \end{minipage}
  \begin{minipage}{7.5cm}
\begin{center}	
  \resizebox{0.95\textwidth}{0.95\textwidth}
        {
          \begin{tikzpicture}[scale=0.065]
\def\vertexscale{1.10}
\def\labelscale{1.30}
\tkzDefPoint(-19.50903,-98.07853){1}
\node [circle,black,draw,scale=\vertexscale] (2) at (-59.76754,29.80231) {2};
\node [circle,black,draw,scale=\vertexscale] (3) at (47.93130,18.18351) {3};
\node [circle,black,draw,scale=\vertexscale] (4) at (-43.26809,41.00102) {4};
\node [circle,black,draw,scale=\vertexscale] (5) at (62.94989,58.18222) {5};
\node [circle,black,draw,scale=\vertexscale] (6) at (-68.75268,-10.71687) {6};
\node [circle,black,draw,scale=\vertexscale] (7) at (31.78681,-18.30642) {7};
\node [circle,black,draw,scale=\vertexscale] (8) at (-68.75540,10.75805) {8};
\node [circle,black,draw,scale=\vertexscale] (9) at (39.29644,51.82029) {9};
\node [circle,black,draw,scale=\vertexscale] (10) at (-43.25201,-40.91710) {10};
\node [circle,black,draw,scale=\vertexscale] (11) at (25.84680,-66.62006) {11};
\node [circle,black,draw,scale=\vertexscale] (12) at (-59.76141,-29.75312) {12};
\node [circle,black,draw,scale=\vertexscale] (13) at (12.62633,29.28264) {13};
\node [circle,black,draw,scale=\vertexscale] (14) at (-23.19104,28.01900) {14};
\node [circle,black,draw,scale=\vertexscale] (15) at (64.82792,38.98912) {15};
\node [circle,black,draw,scale=\vertexscale] (16) at (-23.16981,-27.73544) {16};
\node [circle,black,draw,scale=\vertexscale] (17) at (7.32703,-41.68700) {17};
\tkzDefPoint(0.00000,-100.00000){18}
\tkzDefPoint(92.38795,38.26834){19}
\tkzDefPoint(83.14696,55.55702){20}
\tkzDefPoint(-83.14696,55.55702){21}
\tkzDefPoint(98.07853,-19.50903){22}
\tkzDefPoint(-55.55702,83.14696){23}
\tkzDefPoint(55.55702,83.14696){24}
\tkzDefPoint(-98.07853,-19.50903){25}
\tkzDefPoint(83.14696,-55.55702){26}
\tkzDefPoint(-98.07853,19.50903){27}
\tkzDefPoint(19.50903,98.07853){28}
\tkzDefPoint(-55.55702,-83.14696){29}
\tkzDefPoint(55.55702,-83.14696){30}
\tkzDefPoint(-83.14696,-55.55702){31}
\tkzDefPoint(-19.50903,98.07853){32}
\tkzDefPoint(98.07853,19.50903){33}
\tkzDefPoint(19.50903,-98.07853){34}
\draw [black] (1) to (16);
\draw [black] (1) to (17);
\draw [black] (2) to (21);
\draw [black] (2) to (4);
\draw [black] (2) to (8);
\draw [black] (3) to (22);
\draw [black] (3) to (13);
\draw [black] (3) to (9);
\draw [black] (4) to (23);
\draw [black] (4) to (14);
\draw [black] (5) to (24);
\draw [black] (5) to (19);
\node [draw=none,fill=none,scale=\labelscale] () at (97.00735,40.18176) {11};
\draw [black] (5) to (15);
\draw [black] (6) to (25);
\draw [black] (6) to (8);
\draw [black] (6) to (12);
\draw [black] (7) to (26);
\draw [black] (7) to (17);
\draw [black] (7) to (13);
\draw [black] (8) to (27);
\draw [black] (9) to (28);
\draw [black] (9) to (15);
\draw [black] (10) to (29);
\draw [black] (10) to (12);
\draw [black] (10) to (16);
\draw [black] (11) to (30);
\draw [black] (11) to (18);
\node [draw=none,fill=none,scale=\labelscale] () at (0.00000,-105.00000) {5};
\draw [black] (11) to (17);
\draw [black] (12) to (31);
\draw [black] (13) to (32);
\draw [black] (14) to (32);
\draw [black] (14) to (16);
\draw [black] (15) to (33);
\tkzDefPoint(-24.38653,-96.98091){A}
\tkzDefPoint(-51.33198,-85.81974){B}
\tkzDefPoint(0.0,0.0){C}
\tkzDrawArc[->,line width=0.9mm, pink](C,B)(A)
\tkzDefPoint(-59.64321,-80.26636){A}
\tkzDefPoint(-80.26636,-59.64321){B}
\tkzDefPoint(0.0,0.0){C}
\tkzDrawArc[<-,line width=0.9mm, gray](C,B)(A)
\tkzDefPoint(-85.81974,-51.33198){A}
\tkzDefPoint(-96.98091,-24.38653){B}
\tkzDefPoint(0.0,0.0){C}
\tkzDrawArc[->,line width=0.9mm, olive](C,B)(A)
\tkzDefPoint(-98.93100,-14.58277){A}
\tkzDefPoint(-98.93100,14.58277){B}
\tkzDefPoint(0.0,0.0){C}
\tkzDrawArc[<-,line width=0.9mm, orange](C,B)(A)
\tkzDefPoint(-96.98091,24.38653){A}
\tkzDefPoint(-85.81974,51.33198){B}
\tkzDefPoint(0.0,0.0){C}
\tkzDrawArc[->,line width=0.9mm, red](C,B)(A)
\tkzDefPoint(-80.26636,59.64321){A}
\tkzDefPoint(-59.64321,80.26636){B}
\tkzDefPoint(0.0,0.0){C}
\tkzDrawArc[<-,line width=0.9mm, blue](C,B)(A)
\tkzDefPoint(-51.33198,85.81974){A}
\tkzDefPoint(-24.38653,96.98091){B}
\tkzDefPoint(0.0,0.0){C}
\tkzDrawArc[->,line width=0.9mm, green](C,B)(A)
\tkzDefPoint(-14.58277,98.93100){A}
\tkzDefPoint(14.58277,98.93100){B}
\tkzDefPoint(0.0,0.0){C}
\tkzDrawArc[->,line width=0.9mm, gray](C,B)(A)
\tkzDefPoint(24.38653,96.98091){A}
\tkzDefPoint(51.33198,85.81974){B}
\tkzDefPoint(0.0,0.0){C}
\tkzDrawArc[->,line width=0.9mm, orange](C,B)(A)
\tkzDefPoint(59.64321,80.26636){A}
\tkzDefPoint(80.26636,59.64321){B}
\tkzDefPoint(0.0,0.0){C}
\tkzDrawArc[->,line width=0.9mm, blue](C,B)(A)
\tkzDefPoint(85.81974,51.33198){A}
\tkzDefPoint(96.98091,24.38653){B}
\tkzDefPoint(0.0,0.0){C}
\tkzDrawArc[<-,line width=0.9mm, yellow](C,B)(A)
\tkzDefPoint(98.93100,14.58277){A}
\tkzDefPoint(98.93100,-14.58277){B}
\tkzDefPoint(0.0,0.0){C}
\tkzDrawArc[<-,line width=0.9mm, green](C,B)(A)
\tkzDefPoint(96.98091,-24.38653){A}
\tkzDefPoint(85.81974,-51.33198){B}
\tkzDefPoint(0.0,0.0){C}
\tkzDrawArc[<-,line width=0.9mm, red](C,B)(A)
\tkzDefPoint(80.26636,-59.64321){A}
\tkzDefPoint(59.64321,-80.26636){B}
\tkzDefPoint(0.0,0.0){C}
\tkzDrawArc[<-,line width=0.9mm, olive](C,B)(A)
\tkzDefPoint(51.33198,-85.81974){A}
\tkzDefPoint(24.38653,-96.98091){B}
\tkzDefPoint(0.0,0.0){C}
\tkzDrawArc[<-,line width=0.9mm, pink](C,B)(A)
\tkzDefPoint(14.58277,-98.93100){A}
\tkzDefPoint(-14.58277,-98.93100){B}
\tkzDefPoint(0.0,0.0){C}
\tkzDrawArc[->,line width=0.9mm, yellow](C,B)(A)
\node [circle,black,draw,fill=white,scale=\vertexscale] (1) at (-19.50903,-98.07853) {1};
\node [circle,black,draw,fill=white,scale=\vertexscale] (20) at (83.14696,55.55702) {1};
\node [circle,black,draw,fill=white,scale=\vertexscale] (21) at (-83.14696,55.55702) {1};
\node [circle,black,draw,fill=white,scale=\vertexscale] (22) at (98.07853,-19.50903) {1};
\node [circle,black,draw,fill=white,scale=\vertexscale] (23) at (-55.55702,83.14696) {1};
\node [circle,black,draw,fill=white,scale=\vertexscale] (24) at (55.55702,83.14696) {1};
\node [circle,black,draw,fill=white,scale=\vertexscale] (25) at (-98.07853,-19.50903) {1};
\node [circle,black,draw,fill=white,scale=\vertexscale] (26) at (83.14696,-55.55702) {1};
\node [circle,black,draw,fill=white,scale=\vertexscale] (27) at (-98.07853,19.50903) {1};
\node [circle,black,draw,fill=white,scale=\vertexscale] (28) at (19.50903,98.07853) {1};
\node [circle,black,draw,fill=white,scale=\vertexscale] (29) at (-55.55702,-83.14696) {1};
\node [circle,black,draw,fill=white,scale=\vertexscale] (30) at (55.55702,-83.14696) {1};
\node [circle,black,draw,fill=white,scale=\vertexscale] (31) at (-83.14696,-55.55702) {1};
\node [circle,black,draw,fill=white,scale=\vertexscale] (32) at (-19.50903,98.07853) {1};
\node [circle,black,draw,fill=white,scale=\vertexscale] (33) at (98.07853,19.50903) {1};
\node [circle,black,draw,fill=white,scale=\vertexscale] (34) at (19.50903,-98.07853) {1};
\end{tikzpicture}
        }
\end{center}
  \end{minipage}
  \end{center}
	\caption{ An embedding of $K_9$ minus a maximum matching of genus $4$ and its dual, that is simple and has a $1$-cut.}
	\label{fig:k9_minmatch}
\end{figure}

\begin{figure}[tb]
  \begin{center}
  \begin{minipage}{7.5cm}
\begin{center}	
  \resizebox{0.95\textwidth}{0.95\textwidth}
        {
          \input{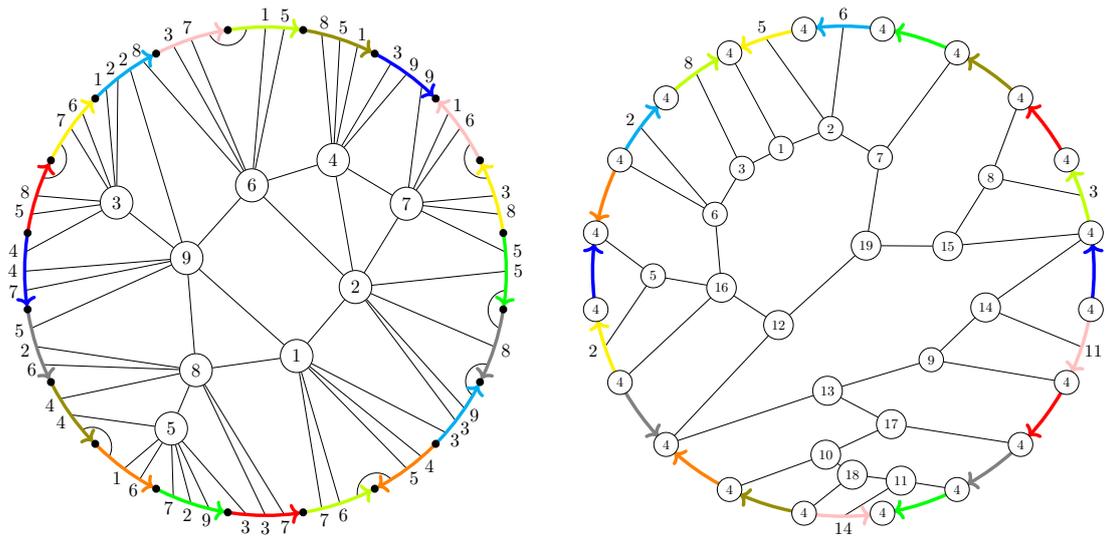}
        }
\end{center}
  \end{minipage}
  \begin{minipage}{7.5cm}
\begin{center}	
  \resizebox{0.95\textwidth}{0.95\textwidth}
        {
          \begin{tikzpicture}[scale=0.065]
\node [circle,black,draw,scale=0.94] (1) at (-24.80815,50.14780) {1};
\node [circle,black,draw,scale=0.94] (2) at (-5.06260,58.25455) {2};
\node [circle,black,draw,scale=0.94] (3) at (-40.42462,42.00139) {3};
\tkzDefPoint(-15.64345,-98.76883){4}
\node [circle,black,draw,scale=0.94] (5) at (-75.85253,-1.87427) {5};
\node [circle,black,draw,scale=0.94] (6) at (-51.20564,23.26989) {6};
\node [circle,black,draw,scale=0.94] (7) at (14.65949,46.46503) {7};
\node [circle,black,draw,scale=0.94] (8) at (58.82605,38.37550) {8};
\node [circle,black,draw,scale=0.94] (9) at (35.08359,-36.13631) {9};
\node [circle,black,draw,scale=0.94] (10) at (-6.99116,-74.87861) {10};
\node [circle,black,draw,scale=0.94] (11) at (23.02069,-85.47199) {11};
\node [circle,black,draw,scale=0.94] (12) at (-25.85599,-21.96584) {12};
\node [circle,black,draw,scale=0.94] (13) at (-6.29652,-48.78622) {13};
\node [circle,black,draw,scale=0.94] (14) at (56.78515,-14.79703) {14};
\node [circle,black,draw,scale=0.94] (15) at (41.63936,10.22673) {15};
\node [circle,black,draw,scale=0.94] (16) at (-48.60042,-6.74654) {16};
\node [circle,black,draw,scale=0.94] (17) at (19.14413,-62.42988) {17};
\node [circle,black,draw,scale=0.94] (18) at (3.59979,-83.04678) {18};
\node [circle,black,draw,scale=0.94] (19) at (9.07867,10.43337) {19};
\tkzDefPoint(0.00000,-100.00000){20}
\tkzDefPoint(-30.90170,95.10565){21}
\tkzDefPoint(95.10565,30.90170){22}
\tkzDefPoint(-0.00000,100.00000){23}
\tkzDefPoint(-58.77853,80.90170){24}
\tkzDefPoint(-80.90170,58.77853){25}
\tkzDefPoint(-95.10565,-30.90170){26}
\tkzDefPoint(95.10565,-30.90170){27}
\tkzDefPoint(98.76883,-15.64345){28}
\tkzDefPoint(-98.76883,-15.64345){29}
\tkzDefPoint(-45.39905,89.10065){30}
\tkzDefPoint(89.10065,45.39905){31}
\tkzDefPoint(89.10065,-45.39905){32}
\tkzDefPoint(15.64345,-98.76883){33}
\tkzDefPoint(15.64345,98.76883){34}
\tkzDefPoint(-89.10065,45.39905){35}
\tkzDefPoint(-45.39905,-89.10065){36}
\tkzDefPoint(45.39905,89.10065){37}
\tkzDefPoint(45.39905,-89.10065){38}
\tkzDefPoint(-70.71068,-70.71068){39}
\tkzDefPoint(-98.76883,15.64345){40}
\tkzDefPoint(98.76883,15.64345){41}
\tkzDefPoint(-70.71068,70.71068){42}
\tkzDefPoint(-15.64345,98.76883){43}
\tkzDefPoint(-89.10065,-45.39905){44}
\tkzDefPoint(70.71068,-70.71068){45}
\tkzDefPoint(70.71068,70.71068){46}
\draw [black] (1) to (2);
\draw [black] (1) to (3);
\draw [black] (1) to (30);
\draw [black] (2) to (21);
\node [draw=none,fill=none,scale=1.20] () at (-32.44678,99.86093) {5};
\draw [black] (2) to (23);
\node [draw=none,fill=none,scale=1.20] () at (-0.00000,105.00000) {6};
\draw [black] (2) to (7);
\draw [black] (3) to (6);
\draw [black] (3) to (24);
\node [draw=none,fill=none,scale=1.20] () at (-61.71745,84.94678) {8};
\draw [black] (4) to (18);
\draw [black] (5) to (26);
\node [draw=none,fill=none,scale=1.20] () at (-99.86093,-32.44678) {2};
\draw [black] (5) to (40);
\draw [black] (5) to (16);
\draw [black] (6) to (25);
\node [draw=none,fill=none,scale=1.20] () at (-84.94678,61.71745) {2};
\draw [black] (6) to (16);
\draw [black] (6) to (35);
\draw [black] (7) to (37);
\draw [black] (7) to (19);
\draw [black] (8) to (22);
\node [draw=none,fill=none,scale=1.20] () at (99.86093,32.44678) {3};
\draw [black] (8) to (15);
\draw [black] (8) to (46);
\draw [black] (9) to (32);
\draw [black] (9) to (13);
\draw [black] (9) to (14);
\draw [black] (10) to (36);
\draw [black] (10) to (17);
\draw [black] (10) to (18);
\draw [black] (11) to (38);
\draw [black] (11) to (20);
\node [draw=none,fill=none,scale=1.20] () at (0.00000,-105.00000) {14};
\draw [black] (11) to (18);
\draw [black] (12) to (39);
\draw [black] (12) to (16);
\draw [black] (12) to (19);
\draw [black] (13) to (39);
\draw [black] (13) to (17);
\draw [black] (14) to (41);
\draw [black] (14) to (27);
\node [draw=none,fill=none,scale=1.20] () at (99.86093,-32.44678) {11};
\draw [black] (15) to (41);
\draw [black] (15) to (19);
\draw [black] (16) to (44);
\draw [black] (17) to (45);
\tkzDefPoint(-20.02985,-97.97349){A}
\tkzDefPoint(-41.38289,-91.03547){B}
\tkzDefPoint(0.0,0.0){C}
\tkzDrawArc[<-,line width=0.9mm, olive](C,B)(A)
\tkzDefPoint(-49.32499,-86.98877){A}
\tkzDefPoint(-67.48897,-73.79186){B}
\tkzDefPoint(0.0,0.0){C}
\tkzDrawArc[<-,line width=0.9mm, orange](C,B)(A)
\tkzDefPoint(-73.79186,-67.48897){A}
\tkzDefPoint(-86.98877,-49.32499){B}
\tkzDefPoint(0.0,0.0){C}
\tkzDrawArc[->,line width=0.9mm, gray](C,B)(A)
\tkzDefPoint(-91.03547,-41.38289){A}
\tkzDefPoint(-97.97349,-20.02985){B}
\tkzDefPoint(0.0,0.0){C}
\tkzDrawArc[<-,line width=0.9mm, yellow](C,B)(A)
\tkzDefPoint(-99.36789,-11.22596){A}
\tkzDefPoint(-99.36789,11.22596){B}
\tkzDefPoint(0.0,0.0){C}
\tkzDrawArc[<-,line width=0.9mm, blue](C,B)(A)
\tkzDefPoint(-97.97349,20.02985){A}
\tkzDefPoint(-91.03547,41.38289){B}
\tkzDefPoint(0.0,0.0){C}
\tkzDrawArc[->,line width=0.9mm, orange](C,B)(A)
\tkzDefPoint(-86.98877,49.32499){A}
\tkzDefPoint(-73.79186,67.48897){B}
\tkzDefPoint(0.0,0.0){C}
\tkzDrawArc[<-,line width=0.9mm, cyan](C,B)(A)
\tkzDefPoint(-67.48897,73.79186){A}
\tkzDefPoint(-49.32499,86.98877){B}
\tkzDefPoint(0.0,0.0){C}
\tkzDrawArc[<-,line width=0.9mm, lime](C,B)(A)
\tkzDefPoint(-41.38289,91.03547){A}
\tkzDefPoint(-20.02985,97.97349){B}
\tkzDefPoint(0.0,0.0){C}
\tkzDrawArc[->,line width=0.9mm, yellow](C,B)(A)
\tkzDefPoint(-11.22596,99.36789){A}
\tkzDefPoint(11.22596,99.36789){B}
\tkzDefPoint(0.0,0.0){C}
\tkzDrawArc[->,line width=0.9mm, cyan](C,B)(A)
\tkzDefPoint(20.02985,97.97349){A}
\tkzDefPoint(41.38289,91.03547){B}
\tkzDefPoint(0.0,0.0){C}
\tkzDrawArc[->,line width=0.9mm, green](C,B)(A)
\tkzDefPoint(49.32499,86.98877){A}
\tkzDefPoint(67.48897,73.79186){B}
\tkzDefPoint(0.0,0.0){C}
\tkzDrawArc[->,line width=0.9mm, olive](C,B)(A)
\tkzDefPoint(73.79186,67.48897){A}
\tkzDefPoint(86.98877,49.32499){B}
\tkzDefPoint(0.0,0.0){C}
\tkzDrawArc[->,line width=0.9mm, red](C,B)(A)
\tkzDefPoint(91.03547,41.38289){A}
\tkzDefPoint(97.97349,20.02985){B}
\tkzDefPoint(0.0,0.0){C}
\tkzDrawArc[->,line width=0.9mm, lime](C,B)(A)
\tkzDefPoint(99.36789,11.22596){A}
\tkzDefPoint(99.36789,-11.22596){B}
\tkzDefPoint(0.0,0.0){C}
\tkzDrawArc[->,line width=0.9mm, blue](C,B)(A)
\tkzDefPoint(97.97349,-20.02985){A}
\tkzDefPoint(91.03547,-41.38289){B}
\tkzDefPoint(0.0,0.0){C}
\tkzDrawArc[<-,line width=0.9mm, pink](C,B)(A)
\tkzDefPoint(86.98877,-49.32499){A}
\tkzDefPoint(73.79186,-67.48897){B}
\tkzDefPoint(0.0,0.0){C}
\tkzDrawArc[<-,line width=0.9mm, red](C,B)(A)
\tkzDefPoint(67.48897,-73.79186){A}
\tkzDefPoint(49.32499,-86.98877){B}
\tkzDefPoint(0.0,0.0){C}
\tkzDrawArc[<-,line width=0.9mm, gray](C,B)(A)
\tkzDefPoint(41.38289,-91.03547){A}
\tkzDefPoint(20.02985,-97.97349){B}
\tkzDefPoint(0.0,0.0){C}
\tkzDrawArc[<-,line width=0.9mm, green](C,B)(A)
\tkzDefPoint(11.22596,-99.36789){A}
\tkzDefPoint(-11.22596,-99.36789){B}
\tkzDefPoint(0.0,0.0){C}
\tkzDrawArc[->,line width=0.9mm, pink](C,B)(A)
\node [circle,black,draw,fill=white,scale=0.94] (4) at (-15.64345,-98.76883) {4};
\node [circle,black,draw,fill=white,scale=0.94] (28) at (98.76883,-15.64345) {4};
\node [circle,black,draw,fill=white,scale=0.94] (29) at (-98.76883,-15.64345) {4};
\node [circle,black,draw,fill=white,scale=0.94] (30) at (-45.39905,89.10065) {4};
\node [circle,black,draw,fill=white,scale=0.94] (31) at (89.10065,45.39905) {4};
\node [circle,black,draw,fill=white,scale=0.94] (32) at (89.10065,-45.39905) {4};
\node [circle,black,draw,fill=white,scale=0.94] (33) at (15.64345,-98.76883) {4};
\node [circle,black,draw,fill=white,scale=0.94] (34) at (15.64345,98.76883) {4};
\node [circle,black,draw,fill=white,scale=0.94] (35) at (-89.10065,45.39905) {4};
\node [circle,black,draw,fill=white,scale=0.94] (36) at (-45.39905,-89.10065) {4};
\node [circle,black,draw,fill=white,scale=0.94] (37) at (45.39905,89.10065) {4};
\node [circle,black,draw,fill=white,scale=0.94] (38) at (45.39905,-89.10065) {4};
\node [circle,black,draw,fill=white,scale=0.94] (39) at (-70.71068,-70.71068) {4};
\node [circle,black,draw,fill=white,scale=0.94] (40) at (-98.76883,15.64345) {4};
\node [circle,black,draw,fill=white,scale=0.94] (41) at (98.76883,15.64345) {4};
\node [circle,black,draw,fill=white,scale=0.94] (42) at (-70.71068,70.71068) {4};
\node [circle,black,draw,fill=white,scale=0.94] (43) at (-15.64345,98.76883) {4};
\node [circle,black,draw,fill=white,scale=0.94] (44) at (-89.10065,-45.39905) {4};
\node [circle,black,draw,fill=white,scale=0.94] (45) at (70.71068,-70.71068) {4};
\node [circle,black,draw,fill=white,scale=0.94] (46) at (70.71068,70.71068) {4};
\end{tikzpicture}
        }
\end{center}
  \end{minipage}
  \end{center}
	\caption{ An embedding of $K_9$ of genus $5$ and its dual, that is simple and has a $1$-cut.}
	\label{fig:k9_5}
\end{figure}

\begin{figure}[tb]
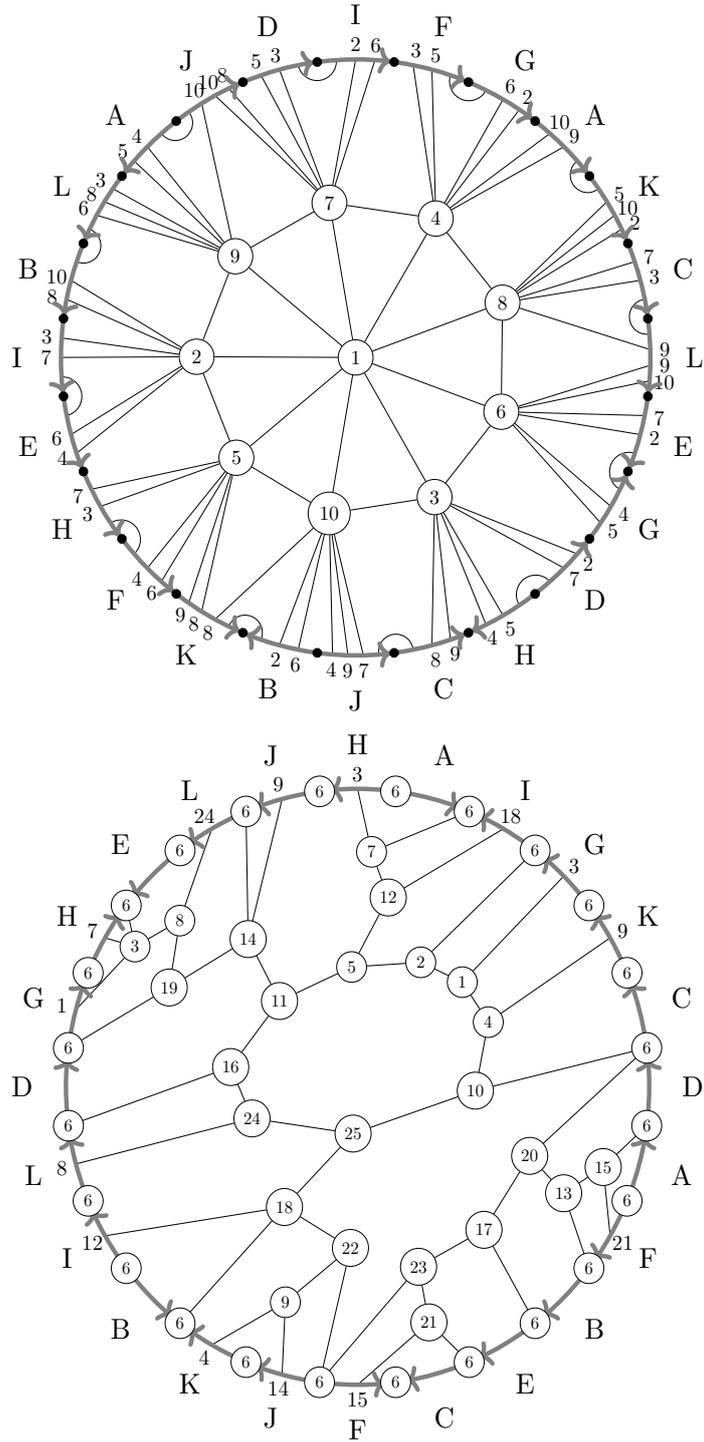

  \begin{center}
  \begin{minipage}{9.6cm}
\begin{center}	
  \resizebox{1.0\textwidth}{1.0\textwidth}
        {
          \input{K10_gen6.tikz}
        }
\end{center}
  \end{minipage}
  \begin{minipage}{9.6cm}
\begin{center}	
  \resizebox{1.0\textwidth}{1.0\textwidth}
        {
          \input{K10_gen6_dual.tikz}
        }
\end{center}
  \end{minipage}
  \end{center}
	\caption{ An embedding of $K_{10}$ of genus $6$ and its dual, that is simple and has a $1$-cut. Due to the large number of edges to be identified, the edges are not coloured, but labeled, to be more easily distinguished.}
	\label{fig:k10_6}
\end{figure}

\section{Conclusion and further work}

We determined exactly which face sizes must be present to allow $1$- or $2$-cuts in simple duals of $c$-connected maps. These results also helped to answer some
questions from \cite{BBZ} and allowed to determine the value of $\delta_2(c)$ for all $c$. For $\delta_1(c)$ we have proven a lower bound that is larger than
$\delta_2(c)$ and have proven an upper bound for some special values of $c$ that deviates by just one from the lower bound. The question whether the lower bound -- or some value close to it -- 
is the exact value of  $\delta_1(c)$ for sufficiently large $c$ is open. In order to solve this problem for large $c$, it is necessary to study embeddings of complete graphs on surfaces
with a genus at least two above the minimum possible genus.

%\vspace{10mm}


\begin{thebibliography}{99}

\bibitem{BBZ}
D. Bokal, G. Brinkmann, and C. T. Zamfirescu, 
The connectivity of the dual, 
J. Graph Theory \textbf{101} (2022), 182--209.

\bibitem{gocox}
  G. Brinkmann, P. Goetschalckx, and S. Schein,
  Comparing the constructions of {G}oldberg, {F}uller, {C}aspar, {K}lug and {C}oxeter, and a general approach to local symmetry-preserving operations,
Proc. of the Royal Soc. A \textbf{473}, 2206 (2017).

\bibitem{preserve_polyh}
  G. Brinkmann, and H. Van den Camp,
  On local operations that preserve symmetries and on preserving polyhedrality of maps,
  Ars Math. Contemp. \textbf{24}  (2024), \#2.01.


%\bibitem{GKN}
%A. Gagarin, W. Kocay, and D. Neilson, 
%Embedings of small graphs on the torus, 
%Cubo Mat. Educ. \textbf{5} (2003), 351--371.

\bibitem{GT}
  J. L. Gross and T. W. Tucker,
  Topological Graph Theory,
  John Wiley and Sons, 1987.
  
\bibitem{GR}
R. K. Guy and G. Ringel,
Triangular imbedding of $K_n-K_6$, 
J. Combin. Theory Ser. B \textbf{21} (1976), 140--145.

%\bibitem{Hu}
%J. P. Huneke, 
%A minimum-vertex triangulation, 
%J. Combin. Theory Ser. B \textbf{24} (1978), 258--266.

%\bibitem{Mo}
%B. Mohar, 
%Face-width of embedded graphs, 
%Mathematica Slovaca \textbf{47} (1997), 35--63.

\bibitem{MT}
B. Mohar and C. Thomassen, 
Graphs on Surfaces, 
Johns Hopkins University Press, Baltimore, MD, 2001.

\bibitem{PZ}
M. D. Plummer and X. Zha, 
On the connectivity of graphs embedded in surfaces, 
J. Combin. Theory Ser. B \textbf{72} (1998), 208--228.

\bibitem{Ri}
G. Ringel, 
Map Color Theorem, 
\textbf{209}, Springer Science $\&$ usiness Media, 1974.

\bibitem{3con-op}
  H. Van den Camp,
  The Eﬀect of Symmetry-preserving Operations on 3-Connectivity,
submitted, arXiv:2301.06913. 

%\bibitem{RY}
%G. Ringel and J. W. T. Youngs, 
%Solution of the Heawood map-coloring problem, 
%Proc. Natl. Acad. Sci. USA \textbf{60} (1968), 438--445.

%\bibitem{Sun2021}
%T. Sun, 
%Face distributions of embeddings of complete graphs, 
%J. Graph Theory \textbf{97} (2021), 281--304.

\bibitem{Sun2020}
T. Sun, 
Simultaneous current graph constructions for minimum triangulations and complete graph embeddings, 
Ars Math. Contemp. \textbf{18} (2020), 309--337.

\end{thebibliography}
\end{document}